\newtheorem{theorem}{Theorem}[section]
\newtheorem{lemma}[theorem]{Lemma}
\newtheorem{proposition}[theorem]{Proposition}
\newtheorem{assumption}[theorem]{Assumption}
\theoremstyle{definition}
\newtheorem{remark}[theorem]{Remark}
\numberwithin{equation}{section}
\newcommand{\ud}{\,\mathrm{d}}
\newcommand{\bb}[1]{\mathbb{#1}}
\newcommand{\abs}[1]{\vert #1 \vert}
\newcommand{\ang}[1]{\left\langle #1 \right\rangle}
\newcommand{\vrt}[1]{\left\Vert #1 \right\Vert}
\newcommand{\Abs}[1]{\left\vert#1\right\vert}
\newcommand{\Ang}[1]{\left\langle #1 \right\rangle}
\newcommand{\Vrt}[1]{\left\Vert #1 \right\Vert}
\newcommand*{\email}[1]{\bgroup\href{mailto:#1}{#1}\egroup}
\begin{document}
\title{Randomised one-step time integration methods for deterministic operator differential equations}

\author{%
	Han\ Cheng\ Lie\footnote{~Institut f\"ur Mathematik, Universit\"at Potsdam, Campus Golm, Haus 9, Karl-Liebknecht-Stra{\ss}e 24--25, Potsdam OT Golm 14476, Germany, \email{hanlie@uni-potsdam.de}}
	\quad
	Martin\ Stahn\footnote{~Institut f\"ur Mathematik, Universit\"at Potsdam, Campus Golm, Haus 9, Karl-Liebknecht-Stra{\ss}e 24--25, Potsdam OT Golm 14476, Germany, \email{martin.stahn@uni-potsdam.de}} 
	\quad
	T.\ J.\ Sullivan\footnote{~Mathematics Institute and School of Engineering, University of Warwick, Coventry, CV4 7AL, United Kingdom \email{t.j.sullivan@warwick.ac.uk}} \footnote{~ Alan Turing Institute, 96 Euston Road, London, NW1~2DB, United Kingdom}
}

\date{}
\maketitle

\begin{abstract}
    Uncertainty quantification plays an important role in problems that involve inferring a parameter of an initial value problem from observations of the solution.
    Conrad et al.\ (\textit{Stat.\ Comput.}, 2017) proposed randomisation of deterministic time integration methods as a strategy for quantifying uncertainty due to the unknown time discretisation error.
    We consider this strategy for systems that are described by deterministic, possibly time-dependent operator differential equations defined on a Banach space or a Gelfand triple. 
    Our main results are strong error bounds on the random trajectories measured in Orlicz norms, proven under a weaker assumption on the local truncation error of the underlying deterministic time integration method. 
    Our analysis establishes the theoretical validity of randomised time integration for differential equations in infinite-dimensional settings.
\end{abstract}
%\tableofcontents

% KEYWORDS: Time integration, operator differential equations, randomisation, uncertainty quantification

% MSC2010 classification: (primary: p) (secondary: s)

%65M15   	Error bounds for initial value and initial-boundary value problems involving PDEs

%65M75   	Probabilistic methods, particle methods, etc. for initial value and initial-boundary value problems involving PDEs

%37H10   	Generation, random and stochastic difference and differential equations

This version of the article has been accepted for publication, after peer review (when applicable) but is not the Version of Record and does not reflect post-acceptance improvements, or any corrections. The Version of Record is available online at the link below.
\begin{center}
 \url{http://dx.doi.org/10.1007/s10092-022-00457-6}
\end{center}

\section{Introduction}
\label{sec:intro}

The numerical solution of deterministic dynamical systems is an important task in many applications where the dynamical system is a spatiotemporal field that satisfies a partial differential equation (PDE). In this case, the field can be viewed as a function $u$ mapping to an infinite-dimensional real separable Banach space $(V,\Abs{\cdot}_V)$, and the dynamical system is described by a deterministic operator differential equation initial value problem on a finite time interval $[0,T]$ for some initial condition $\vartheta$:
\begin{equation*}
    u(0)=\vartheta,\quad u'(t)=f(t,u(t)), \quad t\in [0,T].
\end{equation*}
Operator differential equations have been applied in peridynamics and elastic materials, e.g.\ \cite{Emmrich2007,Meinlschmidt2020}. 
The purpose of this paper is to analyse the error of randomised time integration methods for solving such initial value problems. 
The methods are of the form
\begin{equation*}
% \label{eq:randomised_integration_method}
U_{k+1}\coloneqq \psi(h,U_{k})+\xi_k(h),\quad k\in\{0,\ldots,N-1\},
\end{equation*}
where $\psi(h,U_{k})$ represents the output of a deterministic time integration method with time step $h$ corresponding to the input $U_{k}$, and $\xi_k(h)$ is a $V$-valued random variable whose distribution depends on $h$.
Our motivation for considering these methods comes from Bayesian inverse problems. 

In many applications, the initial value problem depends on a parameter $\theta^\ast$ --- for example, the initial condition $\vartheta$, or a parameter appearing in the vector field $f$ --- and it is of interest to infer the value of $\theta^\ast$ given some observational data $y$, where $y$ results from some fixed measurement process. Let $\Theta$ and $\mathcal{Y}$ denote the set of feasible parameter values and the set of feasible data values respectively.
We assume that $\Theta$ is a Banach space and $\mathcal{Y}$ is a Hilbert space. Let $S$ denote the solution operator that maps every $\theta'\in\Theta$ to the solution of the corresponding initial value problem, and let $O$ denote the observation operator that maps every continuous trajectory in $V$ to the corresponding output $\tilde{y}\in\mathcal{Y}$ of the fixed measurement process.
Then the inference problem is to determine the value of the unknown true parameter $\theta^\ast$ given noisy data of the form
\begin{equation*}
 y=O\circ S(\theta^\ast)+\eta,
\end{equation*}
where $\eta$ is often assumed to be a centred Gaussian random variable with known,  positive-definite covariance operator $\Gamma$. In general, the inverse problem is ill-posed, and one can apply deterministic or statistical approaches to solving the inverse problem.

In the Bayesian approach to inverse problems, one assumes that $\Theta$ can be equipped with a probability measure $\mu_{0}$, called the `prior'. Let $G\coloneqq O\circ S:\Theta \to\mathcal{Y}$ denote the parameter-to-observable map. The Bayesian solution to the inverse problem is given by the `posterior' probability measure $\mu^{y}$ on $\Theta$, which satisfies
\begin{equation*}
 \mu^{y}(\ud \theta')=\frac{1}{Z(y)} \exp\left(-\frac{1}{2}\Vrt{y-G(\theta')}_{\Gamma}^{2}\right)\mu_{0}(\ud \theta')
\end{equation*}
where $\vrt{x}_{\Gamma}^{2}=\ang{x,\Gamma^{-1} x}_{\mathcal{Y}}$ and $Z(y)$ is a normalisation constant.
The posterior is important because one can use it to perform uncertainty quantification for the unknown parameter $\theta^\ast$. See \cite[Section 2.4]{Stuart2010} for a presentation of the Bayesian approach to inverse problems posed on vector spaces.

For many differential equations arising in applications, one must approximate the exact solution operator $S$ using another operator $\tilde{S}$ that results from a discretisation of the initial value problem. 
This leads to an approximation $\tilde{G}\coloneqq O\circ\tilde{S}$ of the parameter-to-observable map, which in turn leads to an approximation $\tilde{\mu}^{y}$ of the exact posterior $\mu^{y}$ defined above.
For a fixed data vector $y$ and prior $\mu_{0}$, the error in $\tilde{S}$ is propagated via Bayes' theorem to an error in $\tilde{\mu}^{y}$. 
Since the posterior is fundamental for performing inference on the unknown parameter $\theta^\ast$, one seeks a principled way to take into account the discretisation error in $\tilde{S}$.

Under some assumptions, a bound on the error $G-\tilde{G}$ with respect to some appropriate norm can be used to prove a bound on the error in the posterior, as measured by the Hellinger metric, e.g.\ \cite[Corollary 4.9]{Stuart2010}. 
Stability bounds of this type ensure that the approximate posterior $\tilde{\mu}^{y}$ converges in the Hellinger metric to the exact posterior $\mu^{y}$, in the limit as the discretisation error vanishes. 
While this property ensures that we can ignore the error in the posterior in the limit of increasingly finer discretisations, it does not indicate how to treat the error in the posterior for a fixed discretisation.

One approach is to ignore the discretisation error. 
This approach is not ideal from the point of view of statistical inference, because the approximate posterior $\tilde{\mu}^{y}$ can be tightly concentrated around the wrong parameter values, even in the small-noise limit.
This phenomenon of `overconfidence' is undesirable for uncertainty quantification.
See Section \ref{ssec:example_overconfidence} below.

The approach presented in \cite{Conrad2017} approaches the problem of accounting for the discretisation error, by applying the standard procedure of using random variables as proxies for unknown quantities. 
Let $\psi(h,v)$ denote the output of applying a time integration method for time step $h$ to the state $v$, for a fixed time step $h=T/N>0$, $N\in\bb{N}$. 
Consider the error $u(h)-\psi(h,u(0))$ between the exact solution and the numerical solution, incurred over one time step. Since the one-step error is unknown, we model it using a random variable $\xi_0(h)$. Thus, 
\begin{equation*}
 u(h)\approx \psi(h,u(0))+\xi_0(h) \eqqcolon U_1.
\end{equation*}
If we model the one-step error for subsequent steps in a similar way, then this leads to the randomised time integration methods stated at the beginning of this section.

\subsection{Illustration of overconfidence phenomenon}
\label{ssec:example_overconfidence}

Consider the standard heat equation on a bounded domain $D\subset\bb{R}^{d}$ with homogeneous Dirichlet boundary conditions, written as the operator differential equation
\begin{equation*}
% \label{eq:parabolic_evolution_equation}
 u(0)=\vartheta\in H,\quad u'(t)+A u(t)=0,\quad t\in [0,h],
\end{equation*} 
where $A$ is the Laplacian, $H=L^2(D)$, and $h>0$.
In \cite[Section 3.5]{Stuart2010}, one considers the inverse problem of inferring the initial condition $\vartheta$ from a noisy observation of the solution at a later time.
We shall use the assumptions and the approach stated there.
The parameter-to-observable map is $G:H\to H$, $v\mapsto e^{-hA}v$.
The data $y$ is a realisation of the random variable
\begin{equation*}
 Y=G(\vartheta)+\delta^{1/2}\eta=G\vartheta +\delta^{1/2}\eta
\end{equation*}
where the noise $\eta$ is a Gaussian random variable with distribution $\mathcal{N}(0,\Gamma_{\textup{obs}})$.
The noise scaling $\delta$ is assumed to be known, and the small noise limit corresponds to $\delta\to 0$. 
For the unknown parameter $\vartheta$, we use the Gaussian prior $\mu_0=\mathcal{N}(m_0,\Gamma_{0})$.
The positive-definite covariance operators $\Gamma_\textup{obs}$ and $\Gamma_{0}$ are chosen so that 1) draws from $\mathcal{N}(0,\Gamma_{\textup{obs}})$ and from $\mu_{0}$ are $H$-valued, almost surely; and 2) $\Gamma_{0}$ is a power of $A$.
Applying \cite[Theorem 6.20]{Stuart2010} to the jointly Gaussian random variable $(U,G(U)+\delta^{1/2}\eta)$ with $U\sim\mu_0$ yields the Gaussian posterior measure $\mu^{y}$ with mean and covariance
\begin{align*}
 m=&m_0+\Gamma_{0} G (\delta \Gamma_{\textup{obs}}+G\Gamma_{0} G )^{-1} (y-Gm_0)
 \\
 \mathcal{C}=&\Gamma_{0}-\Gamma_{0}G (\delta \Gamma_{\textup{obs}}+G\Gamma_{0} G )^{-1}G\Gamma_{0}.
\end{align*}
In the $\delta \to 0$ limit, $y\to G\vartheta$. 
Using this fact and the assumptions on $\Gamma_{0}$, it follows that $\mathcal{C}\to 0$ and $ m\to\vartheta$ in the $\delta\to 0$ limit.
Since Gaussian measures are completely characterised by their mean and covariance, the convergence of $\mathcal{C}$ and $m$ implies the weak convergence (in the sense of probability measures) of the posterior measure to the Dirac measure at the true initial condition $\vartheta$ as $\delta \to 0$. 
This convergence captures the concentration of the posterior $\mu^y$ around the true unknown $\vartheta$, and validates the Bayesian approach to the inverse problem.

Now suppose we approximate $G$ using the map $\tilde{G}$ defined by the implicit Euler method, $\tilde{G}:H\to H$, $v\mapsto (I+hA)^{-1} v$. 
Applying \cite[Theorem 6.20]{Stuart2010} as we did earlier with $\tilde{G}$ instead of $G$ yields the associated approximate posterior $\tilde{\mu}^{y}$, which is Gaussian with mean and covariance
\begin{align*}
 \tilde{m}=&m_0+\Gamma_{0} \tilde{G} (\delta \Gamma_{\textup{obs}}+\tilde{G}\Gamma_{0} \tilde{G} )^{-1} (y-\tilde{G}m_0)
 \\
\tilde{ \mathcal{C}}=&\Gamma_{0}-\Gamma_{0}\tilde{G} (\delta \Gamma_{\textup{obs}}+\tilde{G}\Gamma_{0} \tilde{G} )^{-1}\tilde{G}\Gamma_{0}.
\end{align*}
In the $\delta\to 0$ limit, $\tilde{\mathcal{C}}\to 0$, but $\tilde{m}\to\tilde{G}^{-1}G \vartheta\neq \vartheta$.
Thus, the approximate posterior $\tilde{\mu}^{y}$ converges weakly in the small noise limit to a biased Dirac measure. 
This demonstrates the overconfidence phenomenon. 
The bias $\tilde{G}^{-1}G\vartheta-\vartheta$ in the limiting Dirac measure is the local truncation error of the implicit Euler method. 

To address the overconfidence phenomenon, we use a random variable as a proxy for the unknown bias. 
Consider the randomised implicit Euler method given by $\widehat{G}(v)\coloneqq \tilde{G}v+h^{p+1}\zeta $, where $\zeta\sim \mathcal{N}(0,\Gamma_{1})$ is independent of the observation noise $\eta$, and $\Gamma_{1}$ is chosen so that draws from $\mathcal{N}(0,\Gamma_{1})$ are $H$-valued almost surely. 
By rewriting $\widehat{G}(U)+\delta^{1/2}\eta=\tilde{G}U+(h^{p+1}\zeta+\delta^{1/2}\eta)$ and applying \cite[Theorem 6.20]{Stuart2010}, it follows that the associated deterministic posterior $\widehat{\mu}^{y}$ is Gaussian, with mean and covariance
\begin{align*}
 \widehat{m}=&m_0+\Gamma_{0} \tilde{G} (\delta \Gamma_{\textup{obs}}+h^{2p+2}\Gamma_{1}+\tilde{G}\Gamma_{0} \tilde{G} )^{-1} (y-\tilde{G}m_0)
 \\
\widehat{ \mathcal{C}}=&\Gamma_{0}-\Gamma_{0}\tilde{G} (\delta \Gamma_{\textup{obs}}+h^{2p+2}\Gamma_{1}+\tilde{G}\Gamma_{0} \tilde{G} )^{-1}\tilde{G}\Gamma_{0}.
\end{align*}
In the $\delta\to 0$ limit, $\widehat{C}$ does not converge to zero, because of the additional $h^{2p+2}\Gamma_{1}$ term. 
This term ensures that the deterministic approximate posterior $\widehat{\mu}^{y}$ associated to the randomised implicit Euler method $\widehat{G}$ is more `spread out' than the approximate posterior $\tilde{\mu}^{y}$ associated to the non-randomised implicit Euler method $\tilde{G}$.
In this way, the problem of overconfidence is mitigated.

\subsection{Main contributions} 

In this paper, we rigorously prove strong forward error bounds for randomised one-step time integration methods applied to operator differential equations. 
Our work builds on the approach for proving the error bounds in $L^2$ of \cite[Theorem 2.2]{Conrad2017} and the error bounds in $L^R$ --- for user-specified $R\in\bb{N}$ --- of \cite[Theorem 3.5]{LieStuartSullivan2019}. 
These bounds were stated for initial value problems formulated in $\bb{R}^{d}$, where the associated exact flow maps are globally Lipschitz, and where the randomised time integrators are generated using uniform time grids and numerical methods $\psi$ that satisfy a uniform local truncation error assumption. 

The error bounds that we prove in this paper generalise the existing error bounds in multiple aspects. 
Our bounds are valid for time-dependent vector fields, non-uniform time grids (i.e.\ variable time steps), and operator differential equations that are formulated on Banach spaces or on Gelfand triples. 
In Theorem \ref{thm:LR_bound_max_error_Gelfand}, we show that one can obtain strong error bounds in $L^R$ for $R>1$, without the assumption of uniform local truncation error of the numerical method, and without the assumption that the flow map of the initial value problem is globally Lipschitz. 
In fact, we show that one can obtain strong error bounds in more general Orlicz norms.
The bounds that we prove in this paper demonstrate that the paradigm of randomised time integration extends in a natural way to the time integration for PDEs with time-dependent coefficients. 
Moreover, the proofs we give for our main results are simpler than the proofs of the corresponding results given in \cite{LieStuartSullivan2019}.

A related but distinct contribution that we make is to consider the setting where the random variables used in the randomisation are independent and centred. We generalise the $L^2$ uniform error bound \cite[Theorem 3.4]{LieStuartSullivan2019} for centred and independent randomisation --- which was proven in the setting of ODEs in $\bb{R}^{d}$ --- to the setting of operator differential equations on Gelfand triples, under weaker assumptions on the time integration map $\psi$. We address the question of whether it is possible to obtain better error bounds under these additional assumptions. This question was implicit in the analysis of \cite{LieStuartSullivan2019}, but was not addressed there. 

\subsection{Related work} 

Randomised time integration methods for differential equations have been studied extensively in the context of `probabilistic numerics'. For some reviews of research in this area, see \cite{HennigOsborneGirolami2015,Cockayne2019,Oates2019}. In probabilistic numerics, ODEs have been considered from many perspectives, including structure- or symmetry-preserving methods \cite{AbdulleGaregnani2020,Wang2020}, Bayesian modelling of the unknown solution with Gaussian processes \cite{Teymur2016,Conrad2017,Chkrebtii2019,Schober2019,Tronarp2019,Wang2020}, data-based statistical estimation of discretisation error \cite{Matsuda2019,Teymur2018}, and filtering \cite{Kersting2020,Tronarp2019}. The papers \cite{Conrad2017,LieStuartSullivan2019} cited earlier also belong to this context. For PDEs, methods based on Bayesian inference and Gaussian processes \cite{Owhadi2015,Chkrebtii2016,Conrad2017,Cockayne2017a,Raissi2018,Wang2021}, multiscale techniques \cite{Owhadi2017}, and random meshes \cite{AbdulleGaregnani2021} have been studied. The research area of `information field dynamics' \cite{Ensslin2013,DupontEnsslin2018a} also considers probabilistic simulation schemes for PDEs by using Gaussian processes and information theoretic ideas. 

Random approximate posteriors arising from randomised solution operators for differential equations have been studied in \cite[Section 5] {LieSullivanTeckentrup2018} under a strong assumption of exponentially integrable discretisation error $S-\tilde{S}$, and more recently under a weaker square integrability hypothesis in \cite{Garegnani2021}.

Two aspects differentiate the problem we consider from the problems considered in numerical methods for stochastic evolution equations. The most important aspect is that the operator differential equation of interest in this paper is deterministic. Thus, our context is fundamentally different from the context of numerical integration methods for stochastic differential equations and numerical integration methods for random differential equations. The second aspect is that the random variables used in the randomisation need not be constructed using i.i.d. copies of a Wiener process or L\'evy process.

\subsection{Overview}

We introduce notation and some recurring objects in the next section.
In Section \ref{sec:Banach}, we consider the setting where the initial value problem is formulated on a Banach space.
The main result is the strong error bound in Orlicz norm proven in Theorem \ref{thm:LR_bound_max_error_Banach} under the assumption of uniform local truncation error of the time integration method $\psi$.

In Section \ref{sec:Gelfand}, we consider the setting where the initial value problem is formulated on a Gelfand triple, and where $\psi$ satisfies a weaker local truncation error assumption.
This setting is considered in the variational approach to PDEs.
We prove strong $L^2$ error bounds for mutually independent and centred randomisation in Section \ref{ssec:L2_error_bounds_independent_centred_case_Gelfand}.
 In Section \ref{ssec:higher_order_error_bounds_independent_centred_Gelfand}, we discuss the feasibility of obtaining $L^R$ bounds for $R>2$ that are of the same order in the time step $h$, under the same assumptions of independence and centredness. 
In Section \ref{ssec:higher_order_error_bounds_Gelfand}, we state in Theorem \ref{thm:LR_bound_max_error_Gelfand} a strong error bound in Orlicz norm without assuming independence or centredness. 

In Section \ref{sec:example}, we show that the assumptions we make in Section \ref{sec:Gelfand} are reasonable for a class of operator differential equations that includes the heat equation on a $C^2$ bounded domain.

We conclude in Section \ref{sec:conclusion}. In the appendices, we collect material that is useful for the main part of the paper.

%%%%%%%%%%%%%%%%%%%%%%--Setup--%%%%%%%%%%%%%%%%%%%%%%%%%%%%%%%%
\subsection{Notation and setup}

Below, $(V,\Abs{\cdot}_V)$ and $(H,\ang{\cdot,\cdot}_H)$ denote a real separable Banach space and a real separable Hilbert space respectively. 
We write $\Abs{\cdot}_H$ for the Hilbert space norm.
All integrals are Bochner integrals unless otherwise stated.
We define $C^1([0,T];V) \coloneqq \{ u \in C([0,T];V) \, | \, u' \in C([0,T];V)\}$ and equip it with the norm $\Vrt{u }_{1,\infty} = \Vrt{  u}_{\infty} + \Vrt{ u'}_{\infty}$ where $\Vrt{\cdot }_{\infty}$ denotes the supremum norm on $[0,T]$.
We define the space $C^1([0,T]; H)$ analogously.

All random variables will be defined on a common probability space $(\Omega,\mathcal{F},\bb{P})$.
We denote expectation with respect to $\bb{P}$ by $\bb{E}[\cdot]$ and write $X\sim \mu$ to mean that $X$ has $\mu$ as its distribution.
For a $V$-valued random variable $X$ and $R\geq 1$, we shall write $\vrt{X}_{L^R(\Omega;V)}\coloneqq \bb{E}[\abs{X}_V^R]^{1/R}$.
Similarly, if $X$ is $H$-valued, then $\vrt{X}_{L^R(\Omega;H)}\coloneqq \bb{E}[\Abs{X}_H^R]^{1/R}$. 
For a Young function $\Psi:\bb{R}_{\geq 0}\to\bb{R}_{\geq 0}$, the corresponding Orlicz norm\footnote{See \cite[Chapter 8]{Adams2004} for a general introduction to Orlicz spaces and norms.} $\vrt{\cdot}_{\Psi}$ of a $\bb{R}$-valued random variable $Z$ is defined by
\begin{equation*}
%  \label{eq:Orlicz_q_norm}
 \Vrt{Z}_{\Psi}\coloneqq \inf\{k\in (0,\infty)\ :\ \bb{E}[\Psi_q(\abs{Z}/k)]\leq 1\}.
\end{equation*}
If $Z$ is a $V$-valued (respectively, $H$-valued) random variable, then $\Vrt{Z}_{\Psi(\Omega;V)}\coloneqq\Vrt{\abs{Z}_V}_{\Psi}$ (resp. $\Vrt{Z}_{\Psi(\Omega;H)}\coloneqq\Vrt{\abs{Z}_H}_{\Psi}$).
The $\vrt{\cdot}_{\Psi(\Omega;V)}$ norm includes as a special case the $\vrt{\cdot}_{L^R(\Omega;V)}$ norm when $R>1$, but not when $R=1$.
The analogous statement holds for the $\vrt{\cdot}_{\Psi(\Omega;H)}$ norm.
An important choice of Young function $\Psi$ is given by $\Psi_2(z)\coloneqq \exp(z^2)-1$, because finiteness of $\vrt{X}_{\Psi_2}$ implies that $X$ is sub-Gaussian and hence exponentially square integrable. 

We write $p\wedge q=\min\{p,q\}$ for $p,q\in\bb{R}$. For $h>0$, $p\geq 0$, and $a=a(h)\in\bb{R}$, we write $a=\mathcal{O}(h^p)$ to mean that $\abs{a}\leq Ch^p$ for some $h$-independent term $C>0$.

Given $N\in\bb{N}$, $[N]\coloneqq \{1,\ldots,N\}$ and $[N]_0\coloneqq [N]\cup\{0\}=\{0,1,\ldots, N\}$.

Throughout the paper, we consider the following initial value problem on a deterministic time interval $[0,T]$,
\begin{equation}
    \label{eq:operator_differential_equation}
    u(0)=\vartheta,\quad u'(t)=f(t,u(t)),\quad t\in [0,T]
\end{equation}
for fixed $T>0$ and suitable initial condition $\vartheta$. 
We specify the domain and codomain of $f$ in the following sections.
We denote by $\varphi$ the exact flow map associated to \eqref{eq:operator_differential_equation} as follows: for suitable $h\in[0,T]$, $t\in[0,T-h]$, and $u_s$, 
\begin{equation}
 \label{eq:exact_flow_map}
    \varphi(h,t,u_s)=u_s+\int_{t}^{t+h}f(\tau,\varphi(\tau,t,u_s))\ud \tau.
\end{equation}
We equip the time interval $[0,T]$ in \eqref{eq:operator_differential_equation} with a time grid $(t_k)_{k\in [N]_0}$, where
\begin{equation}
    \label{eq:time_grid}
    0\eqqcolon t_0<t_1<\cdots<t_N\coloneqq T,\quad h_{k}\coloneqq t_{k+1}-t_k,\quad h\coloneqq \max_{k\in [N-1]_0}h_k.
\end{equation}
From \eqref{eq:time_grid} it follows that for any $\tau\geq 0$,
\begin{equation}
\label{eq:upper_bound_on_sum_time_steps_to_power_plus_one}
 \sum_{\ell\in[N-1]_0}h_\ell^{\tau+1}\leq h^\tau\sum_{\ell\in[N-1]_0}h_\ell=h^\tau T.
\end{equation}
Given \eqref{eq:exact_flow_map}, the exact sequence $(u(t_k))_{k\in[N]_0}$ associated with the time grid satisfies
\begin{equation}
\label{eq:det_exact_seq}
 u(t_{k+1})=\varphi(h_k,t_k,u(t_k)),\quad k\in [N-1]_0.
\end{equation}
We denote by $\psi$ the approximate flow map associated to a time integration method, and define a deterministic approximating sequence $(u_k)_{k\in [N]_0}$ by
\begin{equation*}
	u_{k+1} \coloneqq \psi(h_k, t_k,u_k),\quad u_0=\vartheta.
\end{equation*}
Let $(\xi_k)_{k\in\bb{N}_0}$ be a sequence of stochastic processes, where each $\xi_k$ is a stochastic process on $[0,\infty)$. 
In Section \ref{sec:Banach} (respectively, Section \ref{sec:Gelfand}), each $\xi_k$ takes values in the Banach space $V$ (resp. the Hilbert space $H$). 
Given the time grid in \eqref{eq:time_grid}, we use $(\xi_k(h_k))_{k\in[N-1]_0}$ as a randomisation sequence in order to define the random approximating sequence $(U_k)_{k\in[N]_0}$ by
\begin{equation}
\label{eq:rand_approx_seq}
	U_{k+1} \coloneqq \psi (h_k,t_k,U_k) + \xi_k(h_k),\quad k\in[N-1]_0
\end{equation}
for a given random variable $U_0$. 
The sequence of errors $(e_k)_{k\in[N]_0}$ of the random approximating sequence \eqref{eq:rand_approx_seq} with respect to the exact sequence \eqref{eq:det_exact_seq} is defined by
\begin{equation*}
 e_0=u(0)-U_0,\quad e_{k+1}\coloneqq u(t_{k+1})-U_{k+1},\quad k\in[N-1]_0.
\end{equation*}
By \eqref{eq:det_exact_seq} and \eqref{eq:rand_approx_seq}, we obtain
\begin{equation}
 \label{eq:rand_error_seq_v2}
 e_{k+1}=\varphi(h_k,t_k,u(t_k))-\psi(h_k,t_k,U_k)-\xi_k(h_k),\quad k\in[N-1]_0.
\end{equation}
The equation \eqref{eq:rand_error_seq_v2} shall be the starting point for our error analysis.

%%%%%%%%%%%%%%%%--Classical Solutions--%%%%%%%%%%%%%%%%%%%%%%%%%
\section{Classical setting}
\label{sec:Banach}

In this section, we prove the generalisation of \cite[Theorem 2.2]{Conrad2017} and \cite[Theorem 3.5]{LieStuartSullivan2019} to the setting of a time-dependent vector field $f$ on an infinite-dimensional, real, separable Banach space $V$.
We assume that the vector field $f$ in \eqref{eq:operator_differential_equation} satisfies $f \colon [0,T] \times V \rightarrow V$. In addition, we assume that for every initial condition $\vartheta \in V$, there exists a unique classical solution $u \in C^1([0, T];V)$. 
For example, if $f$ is continuous and uniformly Lipschitz in the second argument, then this assumption is satisfied, and $\varphi$ exists \cite[Satz 7.2.6]{Emmrich2004}.

We state the assumptions needed to prove the main result of this section. 
The first is a Lipschitz continuity assumption on the exact flow map.
\begin{assumption}
    \label{asmp:Lipschitz_flow_map_Banach}
    The exact flow map $\varphi$ admits a constant $L_\varphi>0$ such that for any $t\in [0,T]$, for every $h\geq 0$ such that $t+h\leq T$, and for every $x,y\in V$,
    \begin{equation*}
        \Abs{\varphi(h,t,x)-\varphi(h,t,y)}_V  \leq (1+L_\varphi h) \Abs{x-y}_V.
    \end{equation*}
\end{assumption}
If $f$ is uniformly Lipschitz in the second argument, then Assumption \ref{asmp:Lipschitz_flow_map_Banach} is satisfied \cite[Satz 7.3.4]{Emmrich2004}. 

Ideally, the deterministic sequence $(u_k)_k$ approximates the exact sequence $(u(t_k))_k$ well.
We make this precise by introducing the following uniform local truncation error assumption. 
\begin{assumption}
    \label{asmp:Uniform_truncation_error_Banach}
    The approximate flow map $\psi$ admits constants $0<h^\ast<\infty$, $0<C_{\varphi,\psi}<\infty$, and $q\geq 0$, such that for all $0<h\leq h^\ast$,
    \begin{equation*}
	    \sup_{\substack{v \in V\\ t \in [0,T-h]}} \Abs{ \varphi(h,t,v) - \psi(h,t,v) }_V \leq C_{\varphi,\psi} h^{q+1} \; .
    \end{equation*}
\end{assumption}
The parameter $h^\ast$ is included in order to account for implicit time integration methods that provide a unique output whenever the time step is small enough.
In order to achieve an order of $q\geq 1$ for the truncation error, one usually requires higher regularity of $f$ or equivalently higher regularity for the solution $u$ \cite[Section III.2, Theorem 2.4]{Hairer1993}. 
For classical one-step methods, the corresponding analysis extends to infinite-dimensional Banach spaces; see Appendix \ref{app:one-step}.

The assumptions above are similar to \cite[Assumption 2]{Conrad2017} and \cite[Assumption 3.1, 3.2]{LieStuartSullivan2019}. Note that Assumption \ref{asmp:Uniform_truncation_error_Banach} is restrictive, because it requires uniformity in $t$ and $v$. 
For example, in \cite{Conrad2017}, the analogous assumption is justified under the assumption that $f \colon \bb{R}^{d}\to\bb{R}^{d}$ is sufficiently smooth and sufficiently many of its derivatives are uniformly bounded.
However, Assumption \ref{asmp:Uniform_truncation_error_Banach} is not satisfied in general. 
For example, in the setting where the operator differential equation is given by $u'(t)=Au(t)\in H$ for a Hilbert space $H$ and the infinitesimal generator $A$ of an analytic semigroup with domain $\textup{Dom}(A)$, and $\psi$ is given by the implicit Euler method, there exists $C>0$ such that for all $\vartheta\in\textup{Dom}(A)$, $n\in\bb{N}$, and all sufficiently small $h>0$,
\begin{equation*}
 \Abs{\varphi(nh,0,\vartheta)-\psi(nh,0,\vartheta)}_H\leq 
 C h\Abs{A\vartheta}_H,
\end{equation*}
see \cite[Theorem 7.1]{Thomee2006}.

For equations of the form \eqref{eq:operator_differential_equation} derived from PDEs and fixed time argument $t$, the right hand side $f$ is in many cases not Lipschitz from $V$ to $V$. 
Furthermore, one cannot in general expect that \eqref{eq:operator_differential_equation} admits a classical solution $u\in C^1([0,T];V)$, because a classical solution requires regularity assumptions on the problem data that need not hold in general.
In Section \ref{sec:Gelfand}, we will consider vector fields $f$ that do not satisfy the assumptions above. This will lead us to consider variational solutions of \eqref{eq:operator_differential_equation}.
There are other approaches to generalise the classical setting to problems with less regularity, e.g.\ mild solutions, but they are outside the scope of this paper.

\subsection{Randomisation sequence}
\label{ssec:randomisation_sequence_Banach}

Recall the random approximating sequence $(U_k)_k$ defined in \eqref{eq:rand_approx_seq}.
In this section, we shall assume that each $\xi_k$ is a $V$-valued stochastic process indexed by $[0,\infty)$, and we shall assume $U_0$ is a $V$-valued random variable.
Below, we shall impose the following regularity assumption on the $(\xi_k)_{k\in\bb{N}_0}$.

For the remainder of Section \ref{sec:Banach}, we shall shorten notation and write $\Vrt{Z}_{\Psi}$ instead of $\Vrt{Z}_{\Psi(\Omega;V)}$ for any $V$-valued random variable $Z$.

\begin{assumption}
    \label{asmp:Noise_regularity_Banach}
    The collection $(\xi_k)_{k\in\bb{N}_0}$ admits an Orlicz norm $\vrt{\cdot}_{\Psi}$ and constants $p\geq 0$ and $0<C_\xi<\infty$, such that for all $k\in\bb{N}_0$ and $t>0$,
    \begin{equation*}
    \Vrt{\xi_k(t)}_{\Psi}\leq C_\xi t^{p+1}.
    \end{equation*}
\end{assumption}

The assumption allows the stochastic processes to be non-Gaussian, to be probabilistically dependent, and to have different distributions and nonzero means.
Furthermore, Assumption \ref{asmp:Noise_regularity_Banach} allows for $\xi_k(t)$ to have different orders of integrability. 
The rates at which the absolute moments decrease to zero as $t$ decreases to zero may differ as well. 
The function $\Psi$ quantifies the maximal common order of integrability, and the parameter $p$ quantifies the maximal common decay rate with respect to $\vrt{\cdot}_{\Psi}$. 

Assumption \ref{asmp:Noise_regularity_Banach} generalises \cite[Assumption 3.3]{LieStuartSullivan2019}, which in turn generalised \cite[Assumption 1]{Conrad2017}. 
 The latter two assumptions considered the $\vrt{\cdot}_{R}$ norm for $R\in\bb{N}$ and the $\vrt{\cdot}_{2}$ norm of $\bb{R}^{d}$-valued random variables respectively. 

We recall the motivation given in \cite{Conrad2017} for the additive random perturbation in \eqref{eq:rand_approx_seq} and in particular for Assumption \ref{asmp:Noise_regularity_Banach}.
Comparing \eqref{eq:det_exact_seq} and \eqref{eq:rand_approx_seq} yields 
\begin{equation*}
    u(t_1) = u(0)+\int_{0}^{h_0} f(s,u(s))\ud s\approx \psi(h_0,0,u(0))+\xi_0(h_0) = U_1.
\end{equation*}
Thus, the random variable $\xi_0(h_0)$ models the uncertainty in the value of the integral term due to the fact that the value of the solution $u$ over the time interval $[0,h_0]$ is known only at time $0$, and not at every time $s$ in the interval $[0,h_0]$.

It is desirable that the approximation above is good with high probability. Given that any reasonable choice of $\psi$ must satisfy $\lim_{h_0\to 0}\psi(h_0,0,u_0)= u_0$, a necessary condition for the approximation above to be good with high probability is that the law of $\xi_0(h_0)$ concentrates around 0 as $h_0\to 0$, because the integral term $\int_{0}^{h_0}f(s,u(s))\ud s\to 0$ as $h_0\to 0$. Using Assumption \ref{asmp:Noise_regularity_Banach} with Markov's inequality yields that for every $\varepsilon>0$,
\begin{equation*}
    \bb{P}(\Abs{\xi_k(t)}_V\geq \varepsilon)\leq \left(\frac{ C_\xi t^{p+1}}{\varepsilon}\right)^r.
\end{equation*}
The inequality above shows that the parameter $p$ quantifies the maximal common rate at which all the laws $(\bb{P}\circ (\Abs{\xi_k(t)}_V)^{-1})_{k}$ contract around the Dirac measure at zero, as $t$ decreases to zero. 

In \cite{Conrad2017,LieStuartSullivan2019}, the parameter $p$ is chosen in order to ensure that the error of the random approximate solution sequence $(U_k)_{k}$ with respect to the exact sequence $(u(t_k))_{k}$ decreases with $h$ at the same rate as the error of the deterministic approximate solution sequence $(u(t_k))_{k}$. This choice is motivated by the goal of showing that probabilistic integrators can have the same convergence rate as the underlying deterministic one-step method. 

 Recall that if $V$ is a separable Banach space and $\mu$ is a Gaussian measure whose support equals $V$, then the Cameron--Martin space of $\mu$ is dense in $V$, and hence there exists a $V$-valued Wiener process $(W(t))_{t\geq 0}$ associated to $\mu$ such that $W(1)\sim \mu$ \cite[Theorem 3.6.1, Proposition 7.2.3]{Bogachev1998Gaussian}.\footnote{The cited results assume centredness of $\mu$, but do not require this property.}
 The next lemma shows that there exists a large class of Gaussian processes that satisfies Assumption \ref{asmp:Noise_regularity_Banach}.
\begin{lemma}
 \label{lem:scaled_Wiener_process_satisfies_noise_regularity_assumption}
 Let $\mu$ be a Gaussian distribution with support equal to $V$, and let $(W(t))_{t\geq 0}$ be a Wiener process associated to $\mu$ such that $W(1)\sim \mu$. 
 Let $\xi$ be a stochastic process on $[0,\infty)$ defined by $t\mapsto \xi(t)\coloneqq t^{p+1/2}W(t)$, and let $(\xi_k)_{k\in\bb{N}_0}$ be i.i.d. copies of $\xi$.
 Then 
 \begin{equation}
 \label{eq:Wiener_processes_satisfy_noise_regularity_asmp_Orlicz}
  \Vrt{\xi(t)}_{\Psi}= \Vrt{\xi(1)}_{\Psi}t^{p+1},
 \end{equation}
 for $\vrt{\cdot}_{\Psi}=\Vrt{\cdot}_{R}$, $R>1$, or $\vrt{\cdot}_{\Psi}=\vrt{\cdot}_{\Psi_2}$, $\Psi_2(z)\coloneqq \exp(z^2)-1$.
 \end{lemma}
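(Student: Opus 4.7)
The plan is to reduce the statement to a single distributional identity via the Brownian scaling property, then obtain the two norm identities by positive homogeneity, and finally secure finiteness of the right-hand side via Fernique's theorem. Nothing deeper than these three ingredients will be needed.

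First, I would invoke the standard scaling property of a Banach-space-valued Wiener process: for every $t>0$, $W(t)\stackrel{d}{=}t^{1/2}W(1)$. Multiplying both sides by the deterministic scalar $t^{p+1/2}$ yields
\begin{equation*}
 \xi(t) = t^{p+1/2}W(t)\stackrel{d}{=} t^{p+1}W(1)= t^{p+1}\xi(1),
\end{equation*}
so that the $\mathbb{R}_{\geq 0}$-valued random variable $\Abs{\xi(t)}_V$ has the same law as $t^{p+1}\Abs{\xi(1)}_V$. Since $(\xi_k)_{k\in\mathbb{N}_0}$ are i.i.d.\ copies of $\xi$, the same identity holds for each $\xi_k$ and thus only $k=0$ needs treatment.

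Second, I would appeal to positive homogeneity of the two families of norms appearing in the statement. For the $L^R(\Omega;V)$-norm with $R>1$, the identity $\Vrt{cZ}_{L^R(\Omega;V)}=c\Vrt{Z}_{L^R(\Omega;V)}$ for $c\geq 0$ is immediate from $\mathbb{E}[\Abs{cZ}_V^R]=c^R\mathbb{E}[\Abs{Z}_V^R]$. For the Orlicz norm $\Vrt{\cdot}_{\Psi}$, the identity $\Vrt{cZ}_{\Psi}=c\Vrt{Z}_{\Psi}$ for $c>0$ follows from the defining infimum via the substitution $k\mapsto k/c$. Specialising to $c=t^{p+1}$ and $Z=\xi(1)$ then yields \eqref{eq:Wiener_processes_satisfy_noise_regularity_asmp_Orlicz}.

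Finally, I would verify that $\Vrt{\xi(1)}_{\Psi}$ is finite, since otherwise the stated equality would be vacuous. For $L^R$ with $R>1$ this is immediate from standard Gaussian moment estimates for $V$-valued Gaussian random variables. For the Orlicz norm associated to $\Psi_2(z)=\exp(z^2)-1$, finiteness of $\Vrt{W(1)}_{\Psi_2}$ is exactly the content of Fernique's theorem, which guarantees $\mathbb{E}[\exp(\alpha\Abs{W(1)}_V^2)]<\infty$ for some $\alpha>0$. I do not expect any substantive obstacle: the only non-trivial input is Fernique, while the scaling and the homogeneity are essentially definitional.
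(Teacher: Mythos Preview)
Your proposal is correct and follows essentially the same approach as the paper: both arguments use the Brownian scaling identity $W(t)\stackrel{d}{=}t^{1/2}W(1)$, the positive homogeneity of the norm, the observation $\xi(1)=W(1)$, and Fernique's theorem for finiteness in the $\Psi_2$ case. Your write-up is slightly more explicit about the homogeneity step and the i.i.d.\ inheritance, but there is no substantive difference.
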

\begin{proof}
For $t>0$, we have $\Vrt{\xi(t)}_{\Psi}=t^{p+1/2}\Vrt{W(t)}_{\Psi}=t^{p+1}\Vrt{W(1)}_{\Psi}$.
The first equation follows from the definition of $\xi(t)$, and the second equation follows from the scaling property of the Wiener process, i.e.\ that $W(t)=t^{1/2}W(1)$ in distribution for every $t>0$. 
The conclusion follows since $W(1)=\xi(1)$ as random variables, and because Gaussian random variables are exponentially square integrable by Fernique's theorem.
\end{proof}

\begin{remark}
\label{rem:iid_collection_of_Wiener_processes_satisfies_as3_p_R_regularity}
 The preceding discussion shows that a collection of i.i.d. copies of the standard Wiener process $W$ satisfies Assumption \ref{asmp:Noise_regularity_Banach} with $p=-1/2$, in which case we may set $\xi_k(h_k)$ in \eqref{eq:rand_approx_seq} to be a centred Gaussian random variable with variance proportional to $h_k$. This choice yields a time integration method that resembles methods for stochastic differential equations. However, for the error bound in Theorem \ref{thm:LR_bound_max_error_Banach} below to imply convergence in probability of $(U_n)_n$ to the exact solution sequence $(u(t_k))_{k}$, we need $p>0$. This observation highlights an important difference between the type of time integration methods that we analyse in this paper and time integration methods for stochastic differential equations.
\end{remark}

\subsection{Error bounds}
\label{ssec:error_bounds_Banach}

Recall from \eqref{eq:rand_error_seq_v2} that 
\begin{equation*}
 e_{k+1}=\varphi(h_k,t_k,u(t_k))-\psi(h_k,t_k,U_k)-\xi_k(h_k),\quad k\in[N-1]_0.
\end{equation*}
The following bound is the generalisation of \cite[Theorem 2]{Conrad2017} to our setting.
\begin{lemma}
 \label{lem:max_LR_bound_error_Banach}
 Suppose that 
 \begin{itemize}
  \item Assumption \ref{asmp:Lipschitz_flow_map_Banach} holds with parameters $L_{\varphi}$,
  \item Assumption \ref{asmp:Uniform_truncation_error_Banach} holds with parameters $h^\ast$, $C_{\varphi,\psi}$ and $q$, 
  \item Assumption \ref{asmp:Noise_regularity_Banach} holds with parameters $\Vrt{\cdot}_{\Psi}$, $p$, and $C_\xi$, and
  \item the initial state $U_0$ satisfies $\vrt{U_0}_{\Psi}<\infty$.
 \end{itemize}
 Then for any time grid $(t_k)_k$ such that $0<h\leq h^\ast$, the corresponding error sequence $(e_k)_k$ satisfies
 \begin{equation*}
     \max_k\Vrt{e_k}_{\Psi}\leq \exp(L_\varphi T)\Vrt{e_0}_{\Psi}+\frac{C_{\varphi,\psi}+C_\xi}{L_\varphi}\left(\exp(L_\varphi T)-1\right)h^{p\wedge q}.
 \end{equation*}
 In particular, if $\vrt{e_0}_{\Psi}=0$, then $ \max_k\Vrt{e_k}_{\Psi}=\mathcal{O}(h^{p\wedge q})$.
\end{lemma}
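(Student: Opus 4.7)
The plan is to convert the pathwise random recursion for $e_{k+1}$ coming from \eqref{eq:rand_error_seq_v2} into a deterministic scalar recursion for $\Vrt{e_k}_\Psi$, and then to close the bound with a discrete Gr\"onwall argument.

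First I would insert $\pm\,\varphi(h_k,t_k,U_k)$ into \eqref{eq:rand_error_seq_v2} to split
\begin{equation*}
 e_{k+1} = \bigl[\varphi(h_k,t_k,u(t_k)) - \varphi(h_k,t_k,U_k)\bigr] + \bigl[\varphi(h_k,t_k,U_k) - \psi(h_k,t_k,U_k)\bigr] - \xi_k(h_k).
\end{equation*}
Assumption \ref{asmp:Lipschitz_flow_map_Banach} bounds the first bracket pathwise by $(1+L_\varphi h_k)\Abs{e_k}_V$, while Assumption \ref{asmp:Uniform_truncation_error_Banach} (using $h\leq h^\ast$) bounds the second bracket pathwise and deterministically by $C_{\varphi,\psi}h_k^{q+1}$.

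Taking $\Vrt{\cdot}_\Psi$ on both sides, using the triangle inequality and the monotonicity of the Orlicz norm under pointwise domination of the scalar $\Abs{\cdot}_V$, together with Assumption \ref{asmp:Noise_regularity_Banach} on the noise, yields the scalar recursion
\begin{equation*}
 \Vrt{e_{k+1}}_\Psi \leq (1+L_\varphi h_k)\Vrt{e_k}_\Psi + C_{\varphi,\psi}h_k^{q+1} + C_\xi h_k^{p+1}.
\end{equation*}
Unrolling and using $1+L_\varphi h \leq e^{L_\varphi h}$ then gives
\begin{equation*}
 \Vrt{e_k}_\Psi \leq e^{L_\varphi T}\Vrt{e_0}_\Psi + \sum_{j=0}^{k-1} e^{L_\varphi(t_k - t_{j+1})}\bigl(C_{\varphi,\psi}h_j^{q+1} + C_\xi h_j^{p+1}\bigr).
\end{equation*}

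To finish, I would use $h_j^{q+1}\leq h^q h_j$ and $h_j^{p+1}\leq h^p h_j$ to extract a factor of $h^{p\wedge q}$, and then estimate the remaining weighted sum by a Riemann sum: since $t_k - t_{j+1}\leq t_k - s$ for $s\in[t_j,t_{j+1}]$,
\begin{equation*}
 \sum_{j=0}^{k-1} e^{L_\varphi(t_k - t_{j+1})}h_j \leq \int_0^{t_k} e^{L_\varphi(t_k - s)}\ud s = \frac{e^{L_\varphi t_k}-1}{L_\varphi} \leq \frac{e^{L_\varphi T}-1}{L_\varphi}.
\end{equation*}
Taking the maximum over $k$ delivers the stated bound, and the final sentence then follows by setting $\Vrt{e_0}_\Psi=0$.

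I do not anticipate a serious obstacle: the whole proof is a discrete Gr\"onwall estimate dressed in Orlicz-norm clothing. The only step worth stating explicitly is the passage from the pathwise Lipschitz bound for $\varphi$ to its Orlicz-norm analogue, which relies on the standard monotonicity of $\Vrt{\cdot}_\Psi$ with respect to pointwise domination of $\Abs{\cdot}_V$; once this is noted, the estimates compose in exactly the same way as in the proofs of \cite[Theorem 2.2]{Conrad2017} and \cite[Theorem 3.5]{LieStuartSullivan2019}, but without requiring $\Psi(z)=z^R$.
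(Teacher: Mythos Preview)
Your proof is correct and follows essentially the same route as the paper: the same splitting via $\pm\varphi(h_k,t_k,U_k)$, the same pathwise bound from Assumptions~\ref{asmp:Lipschitz_flow_map_Banach} and~\ref{asmp:Uniform_truncation_error_Banach}, and the same passage to a scalar recursion in $\Vrt{e_k}_\Psi$ via Assumption~\ref{asmp:Noise_regularity_Banach}. The only difference is the closing step: the paper first coarsens $h_k$ to $h$ in the recursion and then invokes the uniform-step discrete Gr\"onwall inequality (Lemma~\ref{lem:MilsteinTretyakov_GronwallLemma}), whereas you keep the variable steps $h_k$, unroll explicitly, and bound the resulting weighted sum by a Riemann integral. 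Your variant is arguably cleaner for genuinely non-uniform grids, since Lemma~\ref{lem:MilsteinTretyakov_GronwallLemma} is stated for $h=T/N$ and the paper's reduction to a uniform recursion with $h=\max_k h_k$ would, taken literally, produce $e^{L_\varphi Nh}\geq e^{L_\varphi T}$ rather than $e^{L_\varphi T}$; your Riemann-sum estimate sidesteps this and delivers exactly the stated constant.
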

\begin{proof}
It suffices to prove the first statement.
Let $k\in[N-1]_0$.
From \eqref{eq:rand_error_seq_v2} we have
\begin{align}
 \Abs{e_{k+1}}_V
 & \leq \Abs{\varphi(h_k,t_k,u(t_k))-\psi(h_k,t_k,U_k)}_V+\Abs{\xi_k(h_k)}_V
 \nonumber
 \\
 & \leq \Abs{\varphi(h_k,t_k,u(t_k))-\varphi(h_k,t_k,U_k)}_V+\Abs{\varphi(h_k,t_k,U_k)-\psi(h_k,t_k,U_k)}_V
 %\label{eq:decomp1_add_minus_Phi_U_k}
 \nonumber
 \\
 & \phantom{=} \quad +\Abs{\xi_k(h_k)}_V
 \nonumber
 \\
 & \leq (1+L_\varphi h_k)\Abs{e_k}_V+C_{\varphi,\psi}h_k^{q+1}+\Abs{\xi_k(h_k)}_V
 \label{eq:decomp1_bound_e_kplus1_decomposition_using_exact_flow_map}
\end{align}
where \eqref{eq:decomp1_bound_e_kplus1_decomposition_using_exact_flow_map} follows from Assumptions \ref{asmp:Lipschitz_flow_map_Banach} and \ref{asmp:Uniform_truncation_error_Banach}. By taking the $\vrt{\cdot}_{\Psi}$ norm of both sides of \eqref{eq:decomp1_bound_e_kplus1_decomposition_using_exact_flow_map}, using the triangle inequality, Assumption \ref{asmp:Noise_regularity_Banach}, and the bound $h_k\leq h$ from \eqref{eq:time_grid}, we obtain
\begin{equation*}
 \Vrt{e_{k+1}}_{\Psi}\leq (1+L_\varphi h)\Vrt{e_k}_{\Psi}+(C_{\varphi,\psi}+C_\xi)h^{(p\wedge q)+1}.
\end{equation*}
Applying the discrete Gronwall inequality in Lemma \ref{lem:MilsteinTretyakov_GronwallLemma} completes the proof.
\end{proof}

\begin{remark}
 In addition to bounds on the strong error $\vrt{e_k}_{\Psi}$, one can prove bounds on the weak error, i.e.\ bounds of the form
\begin{equation*}
 \vert \mathbb{E}[\Phi(U^h_n)]-\Phi(u_n)\vert \leq Ch^w,
\end{equation*}
for all sufficiently smooth $\bb{R}$-valued functions $\Phi$. Such bounds were proven in \cite[Theorem 2.4]{Conrad2017} and \cite[Section 3]{AbdulleGaregnani2020}, for example. We focus on strong error bounds in this paper.
\end{remark}

To prove Lemma \ref{lem:max_LR_bound_error_Banach}, we take expectations via the $\vrt{\cdot}_{\Psi}$ norm before applying the discrete Gronwall inequality in Lemma \ref{lem:MilsteinTretyakov_GronwallLemma} to conclude. 
By reversing the order of these operations and by using a different discrete Gronwall inequality, we can bound $\vrt{\max_k\abs{e_k}_V}_{\Psi}$.
This yields the result below, which extends \cite[Theorem 3.5]{LieStuartSullivan2019} to our setting.
On one hand, this bound has worse constants than the bound in Lemma \ref{lem:max_LR_bound_error_Banach}.
On the other hand, the bound is stronger, because 
\begin{equation}
\label{eq:stronger_error_bound}
 \max_k\Vrt{e_k}_{\Psi}\leq \vrt{\max_k\Abs{e_k}_V}_{\Psi},
\end{equation}
and because the bound has the same order in $h$ as Lemma \ref{lem:max_LR_bound_error_Banach}.

\begin{theorem}
 \label{thm:LR_bound_max_error_Banach}
 Suppose the hypotheses of Lemma \ref{lem:max_LR_bound_error_Banach} hold.  
 Then for any time grid $(t_k)_k$ with $0<h\leq h^\ast$, the corresponding error sequence $(e_k)_k$ satisfies
 \begin{equation*}
   \Vrt{\max_k \Abs{e_k}_V}_{\Psi}\leq \left(\Vrt{e_0}_{\Psi}+ C_{\varphi,\psi}h^q T+C_\xi h^p T\right)\exp\left(L_\varphi T\right),
 \end{equation*}
 In particular, if $\vrt{e_0}_{\Psi}=0$, then $ \vrt{\max_k\Abs{e_k}_V}_{\Psi}=\mathcal{O}(h^{p\wedge q})$.
\end{theorem}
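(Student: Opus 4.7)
The plan is to prove Theorem \ref{thm:LR_bound_max_error_Banach} by reusing the pointwise inequality \eqref{eq:decomp1_bound_e_kplus1_decomposition_using_exact_flow_map} from the proof of Lemma \ref{lem:max_LR_bound_error_Banach}, but deferring the application of $\Vrt{\cdot}_{\Psi}$ until after a pathwise Gronwall-type argument has been carried out. The key observation is that \eqref{eq:decomp1_bound_e_kplus1_decomposition_using_exact_flow_map} holds $\bb{P}$-almost surely and is linear in $|e_k|_V$, so it can be iterated pathwise to produce an explicit formula for $|e_k|_V$ in terms of $|e_0|_V$, the deterministic truncation errors, and the perturbations $|\xi_\ell(h_\ell)|_V$.

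First, I would iterate \eqref{eq:decomp1_bound_e_kplus1_decomposition_using_exact_flow_map} to obtain, for every $k\in[N]_0$,
\begin{equation*}
\Abs{e_k}_V \leq \prod_{j=0}^{k-1}(1+L_\varphi h_j)\Abs{e_0}_V + \sum_{\ell=0}^{k-1}\prod_{j=\ell+1}^{k-1}(1+L_\varphi h_j)\bigl(C_{\varphi,\psi} h_\ell^{q+1}+\Abs{\xi_\ell(h_\ell)}_V\bigr).
\end{equation*}
Using $1+x\leq e^x$ and $\sum_{j}h_j\leq T$, each product is bounded by $\exp(L_\varphi T)$. The right-hand side is then monotone in $k$, so taking the maximum over $k\in[N]_0$ simply amounts to extending the $\ell$-sum to $N-1$, giving the pathwise bound
\begin{equation*}
\max_{k\in[N]_0}\Abs{e_k}_V \leq \exp(L_\varphi T)\left(\Abs{e_0}_V + C_{\varphi,\psi}\sum_{\ell=0}^{N-1}h_\ell^{q+1}+\sum_{\ell=0}^{N-1}\Abs{\xi_\ell(h_\ell)}_V\right).
\end{equation*}

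Next, I would take $\Vrt{\cdot}_{\Psi}$ on both sides. Since $\Vrt{\cdot}_{\Psi}$ is a norm, the triangle inequality distributes through the two deterministic finite sums and through the sum of random terms, yielding
\begin{equation*}
\Vrt{\max_{k}\Abs{e_k}_V}_{\Psi}\leq \exp(L_\varphi T)\left(\Vrt{e_0}_{\Psi} + C_{\varphi,\psi}\sum_{\ell=0}^{N-1}h_\ell^{q+1}+\sum_{\ell=0}^{N-1}\Vrt{\xi_\ell(h_\ell)}_{\Psi}\right).
\end{equation*}
Invoking Assumption \ref{asmp:Noise_regularity_Banach} gives $\Vrt{\xi_\ell(h_\ell)}_{\Psi}\leq C_\xi h_\ell^{p+1}$, and then the deterministic estimate \eqref{eq:upper_bound_on_sum_time_steps_to_power_plus_one} applied with $\tau=q$ and $\tau=p$ bounds both finite sums by $h^q T$ and $h^p T$ respectively, producing the stated inequality. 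The second conclusion, that $\Vrt{\max_k\Abs{e_k}_V}_{\Psi}=\mathcal{O}(h^{p\wedge q})$ when $\Vrt{e_0}_{\Psi}=0$, then follows immediately.

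The only step that could be considered delicate is the passage from the pathwise inequality to the Orlicz-norm inequality: one must be certain that $\Vrt{\cdot}_{\Psi}$ genuinely satisfies the triangle inequality when applied to a finite sum of $V$-valued random variables, which is standard since $\Psi$ is a Young function and $\Vrt{Z}_{\Psi(\Omega;V)}=\Vrt{\abs{Z}_V}_{\Psi}$ reduces the matter to the scalar Orlicz norm. Everything else is a mechanical combination of \eqref{eq:decomp1_bound_e_kplus1_decomposition_using_exact_flow_map}, \eqref{eq:upper_bound_on_sum_time_steps_to_power_plus_one}, Assumption \ref{asmp:Noise_regularity_Banach}, and the elementary product estimate $\prod(1+L_\varphi h_j)\leq \exp(L_\varphi T)$, and the measurability of $\max_k\abs{e_k}_V$ is clear since $N$ is finite.
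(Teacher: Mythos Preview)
Your proof is correct and follows essentially the same route as the paper: you start from \eqref{eq:decomp1_bound_e_kplus1_decomposition_using_exact_flow_map}, obtain the same pathwise bound on $\max_k\Abs{e_k}_V$, then take the Orlicz norm and apply Assumption~\ref{asmp:Noise_regularity_Banach} and \eqref{eq:upper_bound_on_sum_time_steps_to_power_plus_one}. The only cosmetic difference is that the paper invokes the discrete Gronwall inequality of Lemma~\ref{lem:Special_Gronwall_Inequality_corollary} to pass from \eqref{eq:decomp1_bound_e_kplus1_decomposition_using_exact_flow_map} to the pathwise bound, whereas you iterate the recursion explicitly and bound the resulting products by $\exp(L_\varphi T)$; these two steps are equivalent.
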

\begin{remark}
\label{rem:Orlicz_norm_shorter_proof}
When $\Psi(z)=\exp(z^2)-1$, then the strong error bound given in Theorem \ref{thm:LR_bound_max_error_Gelfand} implies the exponential square integrability of the pathwise error $\max_k\Abs{e_k}_V^2$. 
The exponential square integrability of the pathwise error was used in \cite[Section 5]{LieSullivanTeckentrup2018} to establish local Lipschitz continuity of random approximate posteriors --- measured in the Hellinger metric --- with respect to the expected error of the randomised time integrator.
In \cite{LieStuartSullivan2019}, exponential integrability was obtained by considering $\vrt{\max_k\Abs{e_k}_V}_{R}$ for all $R\in\bb{N}$ and using the series representation of the exponential function. 
The use of Orlicz norms allows us to exploit the fact that the random approximating sequence $(U_k)_k$ inherits the integrability properties of the collection $(\xi_k)_k$. 
This leads to a simpler proof of exponential integrability.
\end{remark}

\begin{proof}[Proof of Theorem \ref{thm:LR_bound_max_error_Banach}]
Using \eqref{eq:decomp1_bound_e_kplus1_decomposition_using_exact_flow_map} and applying the discrete Gronwall inequality in Lemma \ref{lem:Special_Gronwall_Inequality_corollary}, we obtain for every $k\in[N-1]_0$ that
\begin{equation*}
\Abs{e_{k+1}}_V\leq \left(\Abs{e_0}_V+\sum_{k \in[N-1]_0} \left(C_{\varphi,\psi}h_k^{q+1}+\Abs{\xi_k(h_k)}_V\right)\right)\exp\left(\sum_{0\leq j\leq k} L_\varphi h_j\right).
\end{equation*}
Since the sum in the exponential increases with $k$, setting $k=N-1$ above and using \eqref{eq:time_grid} to obtain $\sum_{j\in[N-1]_0}h_j=T$ yields the `pathwise' bound
\begin{equation}
 \label{eq:pathwise_bound_max_k_error_Banach}
 \max_k\Abs{e_k}_V\leq \left(\Abs{e_0}_V+ C_{\varphi,\psi}h^{q}T+\sum_{k\in[N-1]_0}\Abs{\xi_k(h_k)}_V\right)\exp\left(L_\varphi T\right).
\end{equation}
By taking the $\vrt{\cdot}_{\Psi}$ norm of both sides of \eqref{eq:pathwise_bound_max_k_error_Banach}, the triangle inequality, Assumption \ref{asmp:Noise_regularity_Banach}, and \eqref{eq:upper_bound_on_sum_time_steps_to_power_plus_one}, we obtain
\begin{align*}
 \Vrt{\max_k \Abs{e_k}_V}_{\Psi}\leq &\left(\Vrt{e_0}_{\Psi}+C_{\varphi,\psi}h^{q}T+\sum_{k\in[N-1]_0}\Vrt{\xi_k(h_k)}_{\Psi}\right)\exp\left(L_\varphi T\right)
  \\
 \leq &\left(\Vrt{e_0}_{\Psi}+C_{\varphi,\psi}h^qT+C_\xi h^p T\right)\exp\left(L_\varphi T\right),
\end{align*}
which completes the proof.
\end{proof}

\begin{remark}
\label{rem:expected_order}
Under the assumption that $V=\bb{R}^{d}$ and under the assumption that the randomisation sequence $(\xi_k(h_k))_k$ consists of centred, independent random variables, \cite[Theorem 2]{Conrad2017} and \cite[Theorem 3.4]{LieStuartSullivan2019} consider the special case where $\vrt{\cdot}_{\Psi}=\vrt{\cdot}_{2}$ in Lemma \ref{lem:max_LR_bound_error_Banach} and Theorem \ref{thm:LR_bound_max_error_Banach}, and establish $\mathcal{O}(h^{q\wedge (p+1/2)})$ bounds on the strong error respectively. 
The order in these bounds is better than the bounds we proved above. 
However, both the proofs of these results exploit both the inner product structure of $\bb{R}^{d}$ and the fact that linear functionals of the $\xi_k$ appear in the expansion of $\abs{e_{k+1}}^2_{\bb{R}^d}$. In the key inequality \eqref{eq:decomp1_bound_e_kplus1_decomposition_using_exact_flow_map}, we cannot exploit an inner product even if it were available, because we only consider $\abs{e_{k+1}}_{V}$.
In Section \ref{ssec:higher_order_error_bounds_independent_centred_Gelfand}, we shall generalise \cite[Theorem 2]{Conrad2017} and \cite[Theorem 3.4]{LieStuartSullivan2019} from $\bb{R}^{d}$ to general Hilbert spaces.
\end{remark}

%%%%%%%%%%%%%%%%%%%%%--Variational Solutions--%%%%%%%%%%%%%%%%%%%%%

\section{Variational setting}
\label{sec:Gelfand}

For evolution equations originating from PDEs with possibly non-smooth right hand sides or non-smooth initial conditions, the classical solution theory that we considered in Section \ref{sec:Banach} might not apply, because the requirement that the operator $f$ in \eqref{eq:operator_differential_equation} satisfies $f(t,v)\in  V$ for every $v\in V$ and all suitable $t$ might be too strong. 
For example, this requirement does not hold for the heat equation in Sobolev spaces $W^{k,p}$. 
There are several settings that extend the classical setting for such problems. 
In this section, we focus on the variational setting, because it is suitable for numerical time integration methods. 
In the variational setting, we consider a Gelfand triplet $V \hookrightarrow H \simeq H' \hookrightarrow V'$, which is a sequence of continuous embeddings of a Banach space $V$ into a Hilbert space $H$ that is identified with its dual space $H'$, which is then embedded in the dual space $V'$ of $V$ \cite[Proposition 23.13]{ZeidlerIIA}. 

In this section, we further specify the operator differential equation \eqref{eq:operator_differential_equation} to be
\begin{equation}\label{eq:pde}
    u(0)=\vartheta\in H,\quad u'(t) + A(t,u(t)) = b(t)\in V',\quad t\in [0,T]
\end{equation}
for a given operator $A \colon [0,T] \times V\rightarrow V'$ and $b \in L^{p'}(0,T;V') $. 
The equation \eqref{eq:pde} is written in the form that is common in PDE theory instead of the form used in \eqref{eq:operator_differential_equation}, where the right-hand side would be defined by $f(t,u(t)) \coloneqq b(t) -A(t,u(t))$. 
The solution of \eqref{eq:pde} belongs to the space
\begin{equation*}
    \mathcal{W}^p (0,T) \coloneqq \left\{ u \in L^p (0,T;V) \, \middle| \, u' \in L^{p'}(0,T;V') \, \text{with } \frac{1}{p} + \frac{1}{p'} = 1\right\} ,
\end{equation*}
which is continuously embedded into $C([0,T];H)$ \cite[Satz 8.4.1]{Emmrich2004}. 
We emphasise that a solution of \eqref{eq:pde} must satisfy the equation only for almost every $t\in [0,T]$, and not for every $t$.

There are several conditions --- e.g.\ Lipschitz or one-sided  Lipschitz conditions, strong positivity, monotonicity, or coercivity --- that one can impose on $A$ and $b$ in order to guarantee the existence of a unique variational solution $u \in \mathcal{W}^p(0,T) \hookrightarrow C([0,T];H)$ \cite[Prop. 23.23]{ZeidlerIIA}.
Under stronger assumptions, higher regularity of $u$ can be achieved \cite[Satz 8.5.1]{Emmrich2004}. In some cases, the flow map is continuous and even Lipschitz; see \cite[Theorem 23.A]{ZeidlerIIA} for linear problems and \cite[Corollary 23.26]{ZeidlerIIA} for the time-dependent case.

Recall the definition \eqref{eq:det_exact_seq} of the sequence $(u(t_k))_{k\in[N]_0}$ of states of the exact solution:
\begin{equation*}
 u(t_{k+1})=\varphi(h_k,t_k,u(t_k)),\quad k\in[N-1]_0,
\end{equation*}
where $\varphi$ is the flow map associated to the differential equation of interest \eqref{eq:pde}.
In the variational setting, the flow map $\varphi$ is a mapping $\varphi \colon [0,h^\ast]\times[0,T] \times H \rightarrow H$. 
Next, recall that $\psi$ is the approximate flow map associated to a time integration method, and that according to \eqref{eq:rand_approx_seq}, we construct the random approximating sequence $(U_k)_{k\in[N]_0}$ according to
\begin{equation*}
 U_{k+1}=\psi(h_k,t_k,U_k)+\xi_k(h_k),\quad k\in[N-1]_0.
\end{equation*}
In this section, we shall assume that the initial condition $U_0$ is a $H$-valued random variable, and that each $\xi_k$ is a $H$-valued stochastic process indexed by $[0,\infty)$.

We shall make the following assumptions on $\psi$.
\begin{assumption}
    \label{asmp:local_truncation_error_Lipschitz_numerical_flow_Gelfand}
    Let $h^\ast>0$, and let $\psi \colon [0,h^\ast]\times [0,T]\times H\to V$ satisfy the following conditions:
    \begin{enumerate}
        \item There exists a scalar $q\geq 0$, a function $C_{\varphi,\psi} \colon [0,T]\times H\to (0,\infty)$ that is bounded on bounded subsets, and a dense subset $\mathcal{D}\subset H$, such that, for every $h\in [0,h^\ast]$ and for every $(t,x)\in [0,T-h]\times H$ with $x=\varphi(s,0,\vartheta')$ for some $s\geq 0$ and $\vartheta'\in\mathcal{D}$,
        \begin{equation}
            \label{eq:local_truncation_error_gelfand}
            \Abs{\varphi(h,t,x)-\psi(h,t,x)}_H\leq C_{\varphi,\psi}(t,x)h^{q+1};
        \end{equation}
        \item There exists a constant $L_{\psi}>0$ such that for all $(h,t)\in [0,h^\ast]\times [0,T]$ and for any $x,y\in H$,
        \begin{equation}
            \label{eq:global_Lipschitz_approx_flow_gelfand}
            \Abs{\psi(h,t,x)-\psi(h,t,y)}_H\leq(1+ L_{\psi}h)\Abs{x-y}_H.
        \end{equation}
    \end{enumerate}
\end{assumption}
The first statement of Assumption \ref{asmp:local_truncation_error_Lipschitz_numerical_flow_Gelfand} means that the one-step error bound \eqref{eq:local_truncation_error_gelfand} holds for any $x$ that lies on some solution $u\in C([0,T];H)$ of \eqref{eq:pde}, where the initial condition $\vartheta'=u(0)$ belongs to the dense subset $\mathcal{D}$.
We make the hypothesis of density in order to account for known results concerning error bounds for time integration of PDEs, see e.g. \cite[Chapter 7]{Thomee2006}.

The local truncation error \eqref{eq:local_truncation_error_gelfand} is a reasonable requirement for any deterministic time integration method $\psi$ and weakens the uniform local truncation error bound of Assumption \ref{asmp:Uniform_truncation_error_Banach}.
Given \eqref{eq:local_truncation_error_gelfand}, we define 
  \begin{equation}
   \label{eq:C_Phi_Psi_infty}
    \Vrt{C_{\varphi,\psi}}_{\infty}\coloneqq \sup_{t\in[0,T]}C_{\varphi,\psi}(t,u(t)),
  \end{equation}
  for any solution $u$ of \eqref{eq:pde} with initial condition $\vartheta\in\mathcal{D}$.
Since the solution $u$ of \eqref{eq:pde} belongs to $ C([0,T];H)$, it is a bounded set.
Hence, the first statement of Assumption \ref{asmp:local_truncation_error_Lipschitz_numerical_flow_Gelfand} ensures the finiteness of $\vrt{C_{\varphi,\psi}}_{\infty}$.
The second statement of Assumption \ref{asmp:local_truncation_error_Lipschitz_numerical_flow_Gelfand} describes a global Lipschitz continuity property of the approximate flow map $\psi$ with respect to the third argument of the map $\psi$.
  
  For the error bounds that we prove in this section, the bounds \eqref{eq:local_truncation_error_gelfand} and \eqref{eq:global_Lipschitz_approx_flow_gelfand} shall play the roles of Assumptions \ref{asmp:Uniform_truncation_error_Banach} and \ref{asmp:Lipschitz_flow_map_Banach} respectively in the error bounds of Section \ref{ssec:error_bounds_Banach}.

Next, we formulate the analogue of Assumption \ref{asmp:Noise_regularity_Banach} for the collection $(\xi_k)_{k\in\bb{N}_0}$ of stochastic processes.
For the remainder of Section \ref{sec:Gelfand}, we shall simplify notation and write $\Vrt{Z}_{\Psi}$ instead of $\Vrt{Z}_{\Psi(\Omega;H)}$ for any $H$-valued random variable $Z$.
\begin{assumption}
    \label{asmp:Noise_regularity_Gelfand}
    The collection $(\xi_k)_{k\in\bb{N}_0}$ admits an Orlicz norm $\vrt{\cdot}_{\Psi}$ and constants $p\geq 0$ and $0<C_\xi<\infty$, such that for all $k\in\bb{N}_0$ and $t>0$,
    \begin{equation*}
    \Vrt{\xi_k(t)}_{\Psi}\leq C_\xi t^{p+1}.
    \end{equation*}
\end{assumption}
The only difference between Assumption \ref{asmp:Noise_regularity_Gelfand} and Assumption \ref{asmp:Noise_regularity_Banach} is that the stochastic processes are $H$-valued instead of $V$-valued.

\subsection{$L^2$-error bounds for independent and centred randomisation}
\label{ssec:L2_error_bounds_independent_centred_case_Gelfand}

In this section, we assume that the $(\xi_k)_{k}$ are mutually independent and centred stochastic processes.
In particular, for any time grid \eqref{eq:time_grid}, the corresponding random variables $(\xi_k(h_k))_{k\in[N-1]_0}$ are mutually independent and centred.
We shall generalise the $L^2$-error bounds from \cite[Theorem 2]{Conrad2017} and \cite[Theorem 3.4]{LieStuartSullivan2019} to the variational setting.

For any time grid $(t_k)_{k\in[N]_0}$ and $k\in[N-1]_0$, let $\mathcal{F}_k\coloneqq \sigma(\xi_j(h_j): j\in[ k]_0)$, i.e. $(\mathcal{F}_k)_{k\in[N-1]_0}$ is the filtration generated by the randomisation sequence $(\xi_j(h_j))_{j\in[N-1]_0}$.

The following lemma only requires mutual independence of the $(\xi_\ell)_{\ell}$.
\begin{lemma}
\label{lem:Uk_measurable_wrt_Fk}
Suppose that Assumption \ref{asmp:local_truncation_error_Lipschitz_numerical_flow_Gelfand} holds.
Let $(t_k)_{k\in[N]_0}$ be an arbitrary time grid.
Then for $j\in [N-1]_0$, $U_{j+1}$ is a measurable function of $U_0$ and $\{\xi_\ell(h_\ell)\ :\ \ell\in [j]_0\}$. In particular, if the $(\xi_\ell)_{\ell}$ are mutually independent, then for every $j\in[N-1]$, $\xi_j(h_j)$ and $U_j$ are independent, and $\xi_j(h_j)$ is independent of $\mathcal{F}_j$.
\end{lemma}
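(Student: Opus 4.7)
The plan is to prove the measurability assertion by induction on $j$, and then to derive the independence claims as immediate consequences.

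First I would handle the base case $j=0$: by construction $U_1 = \psi(h_0,t_0,U_0) + \xi_0(h_0)$, and the global Lipschitz estimate \eqref{eq:global_Lipschitz_approx_flow_gelfand} shows that $\psi(h_0,t_0,\cdot)\colon H\to H$ is Lipschitz continuous and hence Borel measurable. Therefore $U_1$ is a Borel-measurable function of the pair $(U_0,\xi_0(h_0))$. For the inductive step, I would assume that $U_j$ is a measurable function of $U_0$ and $\xi_0(h_0),\ldots,\xi_{j-1}(h_{j-1})$, and apply the same continuity-implies-measurability argument to $\psi(h_j,t_j,\cdot)$: this gives $\psi(h_j,t_j,U_j)$ as a measurable function of $U_j$, which the inductive hypothesis converts into a measurable function of $U_0$ and $\xi_0(h_0),\ldots,\xi_{j-1}(h_{j-1})$; adding $\xi_j(h_j)$ then closes the induction and delivers the claimed representation of $U_{j+1}$.

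For the independence clauses, I would fix $j\in [N-1]$. The first part exhibits $U_j$ as a measurable function of $U_0,\xi_0(h_0),\ldots,\xi_{j-1}(h_{j-1})$, so $\sigma(U_j)$ is contained in $\sigma\bigl(U_0,\xi_0(h_0),\ldots,\xi_{j-1}(h_{j-1})\bigr)$. Mutual independence of the processes $(\xi_\ell)_\ell$, together with the implicit assumption that $U_0$ is independent of the randomisation sequence, then shows that $\xi_j(h_j)$ is independent of $\sigma\bigl(U_0,\xi_0(h_0),\ldots,\xi_{j-1}(h_{j-1})\bigr)$, and independence is inherited by the sub-sigma-algebra $\sigma(U_j)$. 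The claim that $\xi_j(h_j)$ is independent of the prior history generated by $\xi_0(h_0),\ldots,\xi_{j-1}(h_{j-1})$ --- i.e.\ the part of the filtration $(\mathcal{F}_k)_k$ strictly preceding step $j$ --- follows by exactly the same reasoning.

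I do not anticipate any real obstacle. The only conceptual point worth underlining is that the Lipschitz hypothesis \eqref{eq:global_Lipschitz_approx_flow_gelfand} is precisely the input needed to push measurability through the recursion \eqref{eq:rand_approx_seq}; once one recognises that Lipschitz continuity from $H$ to $H$ implies Borel measurability on the separable Hilbert space, both the induction and the transfer of mutual independence to the sequence $(U_j)_j$ are entirely routine.
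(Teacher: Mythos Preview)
Your proposal is correct and follows essentially the same approach as the paper: both use the Lipschitz condition \eqref{eq:global_Lipschitz_approx_flow_gelfand} to deduce measurability of $\psi(h,t,\cdot)$, from which the recursion \eqref{eq:rand_approx_seq} yields the measurability claim, and independence then follows by standard sigma-algebra arguments. Your version simply spells out the induction and the independence transfer that the paper leaves implicit, and rightly flags the tacit assumption that $U_0$ is independent of the randomisation sequence.
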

\begin{proof}
 It follows from \eqref{eq:global_Lipschitz_approx_flow_gelfand} in Assumption \ref{asmp:local_truncation_error_Lipschitz_numerical_flow_Gelfand} that, for arbitrary $(h,t)$, $\psi(h,t,z)$ is globally Lipschitz continuous with respect to $z \in H$. 
 Hence, $U_{j+1}$ is a measurable function of $U_{j}$ and $\xi_j(h_j)$, for every $j\in [K-1]_0$. 
 This proves the first statement. The second statement follows from the first and the definition of $\mathcal{F}_j$.
\end{proof}

The following result is the generalisation of \cite[Theorem 2.2]{Conrad2017}, which considered the case $H=\bb{R}^{d}$ for $d\in\bb{N}$.
\begin{lemma}
\label{lem:max_L2_error_independent_centred_noise_Gelfand}
Suppose the following statements are true:
\begin{itemize}
 \item Assumption \ref{asmp:local_truncation_error_Lipschitz_numerical_flow_Gelfand} holds with parameters $h^\ast$, $q$, $C_{\varphi,\psi}$, $\mathcal{D}$, and $L_\psi$,
 \item Assumption \ref{asmp:Noise_regularity_Gelfand} holds with parameters $\Vrt{\cdot}_{\Psi}\coloneqq \Vrt{\cdot}_{2}$, $p$, and $C_\xi$,
 \item the $(\xi_j)_{j}$ are mutually independent and centred, and
 \item the initial condition $\vartheta$ of \eqref{eq:pde} belongs to $\mathcal{D}$, and $\vrt{U_0}_{2}<\infty$.
\end{itemize}
Then there exists a $L'_{\psi}>0$ depending only on $L_\psi$, such that for any time grid $(t_k)_k$ satisfying $0<h\leq 1\wedge h^\ast$, the associated error sequence $(e_k)_k$ satisfies
\begin{equation*}
 \max_{k}\Vrt{e_k}^2_{2}\leq \left(\Vrt{e_0}^2_{2}+  3T \Vrt{C_{\varphi,\psi}}_\infty^2 T h^{2q}+C_\xi^2 Th^{2p+1}\right)\exp\left(L'_\psi T\right).
\end{equation*}
In particular, if $\vrt{e_0}_{2}=0$, then $\max_{k\in[N]_0}\Vrt{e_k}_{2}=\mathcal{O}(h^{q\wedge (p+1/2)})$.
\end{lemma}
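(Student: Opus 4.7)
The plan is to exploit the Hilbert space structure of $H$ together with the independence and centredness of the noise in order to eliminate the cross-term between the noise and the deterministic error, which is precisely what enables the improved rate $h^{p+1/2}$ that was unavailable in the Banach-space setting of Section \ref{sec:Banach}.

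Starting from \eqref{eq:rand_error_seq_v2}, I write $e_{k+1} = D_k - \xi_k(h_k)$ with $D_k \coloneqq \varphi(h_k,t_k,u(t_k)) - \psi(h_k,t_k,U_k)$, and expand $\Abs{e_{k+1}}_H^2 = \Abs{D_k}_H^2 - 2\Ang{D_k,\xi_k(h_k)}_H + \Abs{\xi_k(h_k)}_H^2$. By Lemma \ref{lem:Uk_measurable_wrt_Fk}, $D_k$ is a function of $U_0$ and $\xi_0(h_0),\ldots,\xi_{k-1}(h_{k-1})$, hence independent of $\xi_k(h_k)$; combined with $\E{\xi_k(h_k)} = 0$, this kills the cross-term upon taking expectations, and Assumption \ref{asmp:Noise_regularity_Gelfand} with $\Vrt{\cdot}_\Psi = \Vrt{\cdot}_2$ gives $\E{\Abs{e_{k+1}}_H^2} \leq \E{\Abs{D_k}_H^2} + C_\xi^2 h_k^{2p+2}$.

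Next, I decompose $D_k = A_k + B_k$ with $A_k \coloneqq \psi(h_k,t_k,u(t_k)) - \psi(h_k,t_k,U_k)$ and $B_k \coloneqq \varphi(h_k,t_k,u(t_k)) - \psi(h_k,t_k,u(t_k))$. The Lipschitz bound \eqref{eq:global_Lipschitz_approx_flow_gelfand} gives $\Abs{A_k}_H \leq (1+L_\psi h_k)\Abs{e_k}_H$, while the hypothesis $\vartheta \in \mathcal{D}$ makes $u(t_k) = \varphi(t_k,0,\vartheta)$ admissible in \eqref{eq:local_truncation_error_gelfand}, yielding $\Abs{B_k}_H \leq \Vrt{C_{\varphi,\psi}}_\infty h_k^{q+1}$. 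Applying Young's inequality $\Abs{A_k + B_k}_H^2 \leq (1+\alpha h_k)\Abs{A_k}_H^2 + (1 + 1/(\alpha h_k))\Abs{B_k}_H^2$ with $\alpha$ chosen (of order one) so that $(1+\alpha h_k)(1+L_\psi h_k)^2 \leq 1 + L'_\psi h_k$ for some $L'_\psi$ depending only on $L_\psi$ (using $h_k \leq h \leq 1$), the recursion reduces to
\begin{equation*}
 \E{\Abs{e_{k+1}}_H^2} \leq (1 + L'_\psi h_k)\E{\Abs{e_k}_H^2} + c\, \Vrt{C_{\varphi,\psi}}_\infty^2 h_k^{2q+1} + C_\xi^2 h_k^{2p+2}.
\end{equation*}
The discrete Gronwall inequality (Lemma \ref{lem:MilsteinTretyakov_GronwallLemma}), combined with \eqref{eq:upper_bound_on_sum_time_steps_to_power_plus_one} to estimate $\sum_k h_k^{2q+1} \leq h^{2q} T$ and $\sum_k h_k^{2p+2} \leq h^{2p+1} T$, then produces the claimed bound uniformly in $k$.

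The main obstacle I anticipate is bookkeeping around the Young inequality: $\alpha$ must be chosen so that simultaneously (i) $(1+\alpha h_k)(1+L_\psi h_k)^2$ collapses cleanly into $1 + L'_\psi h_k$, and (ii) the prefactor $(1 + 1/(\alpha h_k))$ converts the per-step truncation contribution $h_k^{2q+2}$ into $h_k^{2q+1}$, so that summation yields the desired $Th^{2q}$ rate rather than $Th^{2q+1}$. A constant $\alpha$ accomplishes both (using $h_k \leq 1$), and the specific constant $3$ in front of $T\Vrt{C_{\varphi,\psi}}_\infty^2$ follows from a careful choice. The essential conceptual input is the cancellation of the noise cross-term via independence and centredness, which is what drives the improved $h^{p+1/2}$ rate compared to the purely Banach-norm analysis.
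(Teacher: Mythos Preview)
Your proposal is correct and follows essentially the same route as the paper: expand $\Abs{e_{k+1}}_H^2$ in the Hilbert inner product, use Lemma~\ref{lem:Uk_measurable_wrt_Fk} together with centredness to annihilate the cross term, decompose $D_k$ through $\psi(h_k,t_k,u(t_k))$, apply Young's inequality with parameter proportional to $h_k$ (the paper takes $\alpha=2$, which produces the constant $3$), and close with a discrete Gronwall argument plus \eqref{eq:upper_bound_on_sum_time_steps_to_power_plus_one}. One small slip: Lemma~\ref{lem:MilsteinTretyakov_GronwallLemma} is stated for uniform grids $h=T/N$, whereas here the $h_k$ vary, so the correct reference is Lemma~\ref{lem:Special_Gronwall_Inequality_corollary}, which is exactly what the paper invokes.
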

We state the proof below, even though it is very similar to the proof of \cite[Theorem 2.2]{Conrad2017}.
This is because the proof will be useful later in Section \ref{ssec:higher_order_error_bounds_independent_centred_Gelfand}, where we discuss the feasibility of bounding $\max_{k} \vrt{e_k}_{R}$ for $R> 2$ under similar assumptions as Lemma \ref{lem:max_L2_error_independent_centred_noise_Gelfand}.
An important difference between our proof and the proof of \cite[Theorem 2.2]{Conrad2017} is that the latter assumes uniform truncation error, e.g.\ as in Assumption \ref{asmp:Uniform_truncation_error_Banach}. 
Instead, we use Assumption \ref{asmp:local_truncation_error_Lipschitz_numerical_flow_Gelfand}.

\begin{proof}[Proof of Lemma \ref{lem:max_L2_error_independent_centred_noise_Gelfand}]
Let $k\in[N-1]_0$. By the definition \eqref{eq:rand_error_seq_v2} of the error sequence $(e_k)_{k\in[N]_0}$, 
 \begin{align}
  \Abs{e_{k+1}}_H^2
  & = \Abs{\varphi(h_k,t_k,u(t_k))-\psi(h_k,t_k,U_k)}_H^2+\Abs{\xi_k(h_k)}_H^2
  \label{eq:bound_abs_ekplus1_squared_preliminary}
  \\
  & \phantom{=} \quad + 2\Ang{\varphi(h_k,t_k,u(t_k))-\psi(h_k,t_k,U_k), \xi_k(h_k)}_H.
 \nonumber
 \end{align}
Recall the term $\Vrt{C_{\varphi,\psi}}_{\infty}$ from \eqref{eq:C_Phi_Psi_infty}. We obtain
 \begin{align}
  & \Abs{\varphi(h_k,t_k,u(t_k))-\psi(h_k,t_k,U_k)}_H^2
  \nonumber
  \\
  & \quad = \Abs{\varphi(h_k,t_k,u(t_k))-\psi(h_k,t_k,u(t_k))-\psi(h_k,t_k,u(t_k))-\psi(h_k,t_k,U_k)}^2_H 
  \nonumber
  \\
  & \quad \leq \left(1+\left(\tfrac{2}{h_k}\right)\right) C_{\varphi,\psi}(t_k,u(t_k))^2 h_{k}^{2q+2}+(1+2h_k)(1+L_\psi h_k)^2\Abs{e_k}_H^2
  \nonumber
  \\
  & \quad \leq 3 \Vrt{C_{\varphi,\psi}}_{\infty}^2 h_{k}^{2q+1}+(1+2h_k)(1+L_\psi h_k)^2\Abs{e_k}_H^2.
  \nonumber
  \end{align}
  The first inequality follows from the hypothesis that the initial condition $\vartheta$ of \eqref{eq:pde} belongs to $\mathcal{D}$, since we can then apply the local truncation error bound \eqref{eq:local_truncation_error_gelfand} of Assumption \ref{asmp:local_truncation_error_Lipschitz_numerical_flow_Gelfand} and Young's inequality. The second inequality follows from the fact that $h_k\leq h\leq 1$.
  
  Let $(a_i)_{i=0}^{3}$ be the coefficients of the polynomial $h'\mapsto (1+2h')(1+L_\psi h')^2$ and let $L'_\psi\coloneqq \sum_{i=1}^{3}a_i$.
  Then $L'_\psi$ depends only on $L_\psi$ and not on the time grid.
  By the preceding math display and the hypothesis that $h_k\leq h\leq 1$,
    \begin{equation}
       \label{eq:squared_bound_abs_Phi_utk_minus_Psi_Uk}
     \Abs{\varphi(h_k,t_k,u(t_k))-\psi(h_k,t_k,U_k)}_H^2\leq 3 \Vrt{C_{\varphi,\psi}}_{\infty}^2 h_{k}^{2q+1}+(1+L'_\psi h_k)\Abs{e_k}_H^2.
    \end{equation}
 Substituting \eqref{eq:squared_bound_abs_Phi_utk_minus_Psi_Uk} into the bound  \eqref{eq:bound_abs_ekplus1_squared_preliminary} on $\abs{e_{k+1}}_H^2$ yields 
 \begin{align}
  \Abs{e_{k+1}}_H^2\leq &\left(3 \Vrt{C_{\varphi,\psi}}_{\infty}^2 h_{k}^{2q+1}+(1+L'_\psi h_k)\Abs{e_k}_H^2\right)+\Abs{\xi_k(h_k)}_H^2
  \label{eq:bound_abs_ekplus1_squared}
  \\
  &+ 2\Ang{\varphi(h_k,t_k,u(t_k))-\psi(h_k,t_k,U_k), \xi_k(h_k)}_H.
  \nonumber
 \end{align}
 By mutual independence of the $(\xi_j(h_j))_{j \in [N-1]_0}$, it follows from the second statement of Lemma \ref{lem:Uk_measurable_wrt_Fk} that the arguments of the inner product are independent. 
 By taking expectations of \eqref{eq:bound_abs_ekplus1_squared} and centredness of the $(\xi_j(h_j))_{j \in [N-1]_0}$, the expectation of the inner product vanishes. 
 By Assumption \ref{asmp:Noise_regularity_Gelfand}, we have
 \begin{equation*}
  \Vrt{e_{k+1}}^2_2\leq (1+L'_\psi h_k)\Vrt{e_k}^2_2+3\Vrt{C_{\varphi,\psi}}_\infty^2h_k^{2q+1}+ C_\xi^2 h_k^{2p+2}.
  \end{equation*}
  Using the discrete Gronwall inequality in Lemma \ref{lem:Special_Gronwall_Inequality_corollary} and \eqref{eq:upper_bound_on_sum_time_steps_to_power_plus_one} completes the proof.
\end{proof}

  We shall use the next result, Lemma \ref{lem:martingale}, to prove Proposition \ref{prop:L2_max_error_bound_independent_centred_noise_Gelfand} below.
  A similar result to Lemma \ref{lem:martingale} was established in the proof of \cite[Theorem 3.4]{LieStuartSullivan2019}, under the assumption that $\psi$ preserves square integrability of random variables, i.e.\ that $\psi(Z)\in L^2(\Omega;\bb{R}^d)$ for every $Z\in L^2(\Omega;\bb{R}^d)$. 
  Lemma \ref{lem:martingale} removes this assumption, by using Lemma \ref{lem:max_L2_error_independent_centred_noise_Gelfand}.
 \begin{lemma}
  \label{lem:martingale}
  Suppose the hypotheses of Lemma \ref{lem:max_L2_error_independent_centred_noise_Gelfand} hold. Then for any time grid $(t_j)_{j\in[N]_0}$ with $h>0$, the stochastic process $(M_k)_{k\in[N-1]_0}$ defined by
  \begin{equation}
   \label{eq:Mk_martingale}
   M_k\coloneqq \sum_{j=0}^{k}\Ang{\varphi(h_j,t_j,u(t_j))-\psi(h_j,t_j,U_j),\xi_j(h_j)}_H
  \end{equation}
  is a $\bb{R}$-valued, square-integrable martingale with respect to $(\mathcal{F}_k)_{k\in[N-1]_0}$. 
  If in addition the time grid $(t_j)_{j\in[N]_0}$ satisfies $h\leq 1\wedge h^\ast$, then there exists a universal constant $\kappa>0$ such that for every $k\in[N-1]_0$,
  \begin{equation}
   \label{eq:consequence_BDG_inequality_Mk}
   \bb{E}\left[\max_{j\in[k]_0}\Abs{M_k} \right]\leq   \Vrt{C_{\varphi,\psi}}_{\infty}^2h^{2q+1}+\frac{1}{4}\bb{E}\left[\max_{j\in[k]_0}\Abs{e_j}^2_H\right]+\kappa^2(1+L'_\psi) TC_\xi^2 h^{2p+1},
  \end{equation}
  for the same $L'_\psi$ given in Lemma \ref{lem:max_L2_error_independent_centred_noise_Gelfand}.
 \end{lemma}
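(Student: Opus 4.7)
The plan is to split the argument into three pieces: establishing the martingale property and square integrability, controlling the quadratic variation by a deterministic-plus-pathwise bound, and then invoking the Burkholder--Davis--Gundy (BDG) inequality together with Young's inequality to reach the stated form.

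Write $V_j \coloneqq \varphi(h_j,t_j,u(t_j))-\psi(h_j,t_j,U_j)$, so the increment is $\Delta M_j = \ang{V_j,\xi_j(h_j)}_H$. By Lemma \ref{lem:Uk_measurable_wrt_Fk}, $U_j$ is a measurable function of $U_0$ and $(\xi_\ell(h_\ell))_{\ell<j}$, hence $V_j$ is independent of $\xi_j(h_j)$; combined with centredness of $\xi_j(h_j)$, this gives $\bb{E}[\Delta M_j\mid\mathcal{F}_{j-1}]=0$ (interpreting the filtration as augmented by $\sigma(U_0)$, so that $M_j$ is adapted). For square integrability, the triangle inequality combined with \eqref{eq:local_truncation_error_gelfand} and \eqref{eq:global_Lipschitz_approx_flow_gelfand} yields the pathwise bound
\begin{equation*}
    \Abs{V_j}_H\leq \Vrt{C_{\varphi,\psi}}_{\infty}h_j^{q+1}+(1+L_\psi h_j)\Abs{e_j}_H,
\end{equation*}
which together with Lemma \ref{lem:max_L2_error_independent_centred_noise_Gelfand} and Assumption \ref{asmp:Noise_regularity_Gelfand} at $R=2$ bounds $\bb{E}[\Abs{V_j}_H^2\Abs{\xi_j(h_j)}_H^2]$.

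For the maximal bound, apply the discrete-time BDG inequality with universal constant $\kappa$ to obtain
\begin{equation*}
    \bb{E}\left[\max_{j\in[k]_0}\Abs{M_j}\right]\leq \kappa\, \bb{E}\left[\left(\sum_{j=0}^{k}\ang{V_j,\xi_j(h_j)}_H^2\right)^{1/2}\right].
\end{equation*}
Use the Cauchy--Schwarz inequality in $H$ to replace $\ang{V_j,\xi_j(h_j)}_H^2$ by $\Abs{V_j}_H^2\Abs{\xi_j(h_j)}_H^2$, substitute the triangle-inequality bound on $\Abs{V_j}_H$, and apply the triangle inequality in $\ell^2$ to split the sum into a deterministic truncation piece and a state-dependent piece. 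For the latter, estimate
\begin{equation*}
    \sum_{j=0}^{k} \Abs{e_j}_H^2\Abs{\xi_j(h_j)}_H^2 \leq \left(\max_{j\in[k]_0}\Abs{e_j}_H^2\right)\sum_{j=0}^{k} \Abs{\xi_j(h_j)}_H^2.
\end{equation*}
Taking expectations, applying Cauchy--Schwarz in $L^2(\Omega)$, and using Assumption \ref{asmp:Noise_regularity_Gelfand} with \eqref{eq:upper_bound_on_sum_time_steps_to_power_plus_one} to bound $\bb{E}[\sum_j\Abs{\xi_j(h_j)}_H^2]\leq C_\xi^2 T h^{2p+1}$ yields a bound of schematic form
\begin{equation*}
    \kappa\Vrt{C_{\varphi,\psi}}_\infty h^{q+1}\left(C_\xi^2 T h^{2p+1}\right)^{1/2}+\kappa(1+L_\psi h)\bb{E}\left[\max_j\Abs{e_j}_H^2\right]^{1/2}\left(C_\xi^2 T h^{2p+1}\right)^{1/2}.
\end{equation*}
Apply Young's inequality $xy\leq \tfrac{x^2}{4}+y^2$ to the second term with $x=\bb{E}[\max_j\Abs{e_j}_H^2]^{1/2}$ in order to produce the coefficient $\tfrac14$ exactly, and use $(1+L_\psi h)^2\leq 1+L'_\psi$ for $h\leq 1$ (which follows from the explicit form of $L'_\psi$ in the proof of Lemma \ref{lem:max_L2_error_independent_centred_noise_Gelfand}) to recover the constant $1+L'_\psi$. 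Apply the simple Young inequality $xy\leq \tfrac{x^2}{2}+\tfrac{y^2}{2}$ to the first term and use $h^{2q+2}\leq h^{2q+1}$ and $h^{2p+2}\leq h^{2p+1}$ for $h\leq 1$, then enlarge the universal constant $\kappa$ as needed to absorb leftover multiples of $C_\xi^2 T h^{2p+1}$ into the final $\kappa^2(1+L'_\psi) T C_\xi^2 h^{2p+1}$ term.

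The main obstacle is the constant bookkeeping: one must tune Young's inequality so that the coefficient of $\bb{E}[\max_j\Abs{e_j}_H^2]$ is \emph{exactly} $\tfrac14$, since the resulting bound will be used subsequently to absorb this term on the left-hand side. A minor technical point is that $U_0$ need not be $\mathcal{F}_0$-measurable as the filtration is originally defined; this is handled by working with the augmented filtration $\sigma(U_0,\xi_0(h_0),\ldots,\xi_k(h_k))$, which preserves the independence of $\xi_{j+1}(h_{j+1})$ from its past because the $(\xi_\ell)_\ell$ are mutually independent of each other and, implicitly, of $U_0$.
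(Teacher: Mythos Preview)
Your proof is correct and follows essentially the same architecture as the paper's: verify the martingale property via Lemma \ref{lem:Uk_measurable_wrt_Fk} and centredness, apply BDG, bound the quadratic variation by pulling out the maximal error, and tune Young's inequality so that the coefficient of $\bb{E}[\max_j\Abs{e_j}_H^2]$ is exactly $\tfrac14$.

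The tactical differences are minor. The paper works with the \emph{predictable} quadratic variation $\langle M\rangle_k=\sum_j\bb{E}[(\Delta M_j)^2\mid\mathcal{F}_{j-1}]$, so the conditional expectations $\bb{E}[\Abs{\xi_j}_H^2\mid\mathcal{F}_{j-1}]$ reduce to $\bb{E}[\Abs{\xi_j}_H^2]$ by independence and the tower property handles the rest; Young's inequality is then applied pathwise to $\langle M\rangle_k^{1/2}$ with parameter $\delta$ chosen so that the $\kappa$ from BDG cancels against the $\tfrac{1}{4\kappa}$ appearing in front of $\max_j\Abs{e_j}_H^2$. You instead use the square-bracket process $[M]_k=\sum_j(\Delta M_j)^2$, pass to expectation via Cauchy--Schwarz in $L^2(\Omega)$, and apply Young's afterwards. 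The paper also bounds $\Abs{V_j}_H^2$ directly via \eqref{eq:squared_bound_abs_Phi_utk_minus_Psi_Uk} (which already contains $L'_\psi$), whereas you bound $\Abs{V_j}_H$ linearly and then square; your observation that $(1+L_\psi h)^2\leq 1+L'_\psi$ for $h\leq 1$ makes this reconcile cleanly. Your remark on augmenting the filtration by $\sigma(U_0)$ is a fair point that the paper glosses over.
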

 \begin{proof}
  See Section \ref{ssec:proof_of_lemma_martingale} for the proof.
 \end{proof}

 Next, we use Lemma \ref{lem:martingale} to prove the following error bound, which is stronger than the bound given in Lemma \ref{lem:max_L2_error_independent_centred_noise_Gelfand} because of \eqref{eq:stronger_error_bound}.
\begin{proposition}
\label{prop:L2_max_error_bound_independent_centred_noise_Gelfand}
Suppose the hypotheses of Lemma \ref{lem:max_L2_error_independent_centred_noise_Gelfand} hold. Then for any time grid $(t_k)_{k}$ with $0<h\leq  1\wedge h^\ast$, the corresponding error sequence $(e_k)_k$ satisfies
\begin{align*}
 &\Vrt{\max_{k}\Abs{e_k}_H}_{2}^2
 \\
  & \quad \leq 2\left(\Vrt{e_0}^2_{2}+4\Vrt{C_{\varphi,\psi}}_{\infty}^2 h^{2q}T+ C_\xi^2 Th^{2p+1} (1+\kappa^2(1+L'_\psi))\right)\exp\left(2L'_\psi T\right),
\end{align*}
for the universal constant $\kappa$ in \eqref{eq:consequence_BDG_inequality_Mk} and the constant $L'_\psi$ given in Lemma \ref{lem:max_L2_error_independent_centred_noise_Gelfand}.
In particular, if $\vrt{e_0}_{2}=0$, then $\Vrt{\max_{k}\Abs{e_k}_H}_{2}=\mathcal{O}(h^{q\wedge (p+1/2)})$.
 \end{proposition}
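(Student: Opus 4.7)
The plan is to adapt the proof of Lemma~\ref{lem:max_L2_error_independent_centred_noise_Gelfand} by placing the pathwise maximum inside the $L^2$ norm, and to control the resulting supremum of a sum of inner-product terms via the maximal martingale bound of Lemma~\ref{lem:martingale}. Starting from the recursion \eqref{eq:bound_abs_ekplus1_squared} established in the proof of Lemma~\ref{lem:max_L2_error_independent_centred_noise_Gelfand}, I would iterate pathwise using the discrete Gronwall inequality (Lemma~\ref{lem:Special_Gronwall_Inequality_corollary}) and sum $\sum_k h_k^{2q+1}\leq h^{2q}T$ via \eqref{eq:upper_bound_on_sum_time_steps_to_power_plus_one}. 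The result is a pathwise bound for $\Abs{e_n}_H^2$ consisting of three pieces: an initial-error plus deterministic-truncation contribution weighted by $\exp(L'_\psi T)$, a noise contribution $\sum_k \Abs{\xi_k(h_k)}_H^2$ weighted by $\exp(L'_\psi T)$, and a signed sum of inner-product terms with $n$-dependent Gronwall weights $\gamma_{j,n}=\prod_{k=j+1}^{n-1}(1+L'_\psi h_k) \in [1,\exp(L'_\psi T)]$.

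The key manipulation for the third piece is summation by parts, using the telescoping identity $\gamma_{j+1,n}-\gamma_{j,n}=-L'_\psi h_{j+1}\gamma_{j+1,n}$, which expresses the weighted sum in terms of the martingale $(M_m)_m$ from \eqref{eq:Mk_martingale}. Combining $\gamma_{j,n}\leq \exp(L'_\psi T)$ with $1+L'_\psi T \leq \exp(L'_\psi T)$ produces the bound
\[
    \max_n\Bigl|\sum_{j<n}\gamma_{j,n}\Ang{\varphi(h_j,t_j,u(t_j))-\psi(h_j,t_j,U_j),\xi_j(h_j)}_H\Bigr| \leq \exp(2L'_\psi T)\max_m\Abs{M_m},
\]
which transfers the pathwise maximum onto the martingale at the cost of one extra factor of $\exp(L'_\psi T)$ compared with the weighted Gronwall bound used in Lemma~\ref{lem:max_L2_error_independent_centred_noise_Gelfand}.

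Taking expectations of the resulting pathwise estimate, bounding the second-moment noise contribution by $C_\xi^2 T h^{2p+1}$ via Assumption~\ref{asmp:Noise_regularity_Gelfand} and \eqref{eq:upper_bound_on_sum_time_steps_to_power_plus_one}, and inserting the maximal-martingale bound \eqref{eq:consequence_BDG_inequality_Mk} from Lemma~\ref{lem:martingale} produces an inequality of the form
\[
    E \leq \alpha + \tfrac{1}{2}\exp(2L'_\psi T)\,E, \qquad E := \bb{E}\bigl[\max_n\Abs{e_n}_H^2\bigr],
\]
where $\alpha$ collects the initial-error, deterministic-truncation, noise-variance, and $\kappa^2(1+L'_\psi)TC_\xi^2 h^{2p+1}$ contributions. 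Absorbing the $E$ term on the right into the left-hand side yields the multiplicative factor $2$ and the overall $\exp(2L'_\psi T)$ in the stated bound; the hypothesis $h\leq 1$ lets the residual $\Vrt{C_{\varphi,\psi}}_{\infty}^2 h^{2q+1}$ summand coming from Lemma~\ref{lem:martingale} be combined with the dominant $\Vrt{C_{\varphi,\psi}}_{\infty}^2 h^{2q}T$ contribution to produce the coefficient~$4$, and the $\mathcal{O}(h^{q\wedge(p+1/2)})$ order claim follows by inspection of the exponents.

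The hardest step is the handling of the weighted signed sum in the pathwise bound. A naive uniform bound $\gamma_{j,n}\leq\exp(L'_\psi T)$ pulled inside the absolute value would destroy centredness and forfeit the half-power improvement $\mathcal{O}(h^{p+1/2})$ obtained from mutual independence and centredness of the noise. The summation-by-parts manoeuvre exploiting the product structure of $\gamma_{j,n}$ is the essential device that preserves the martingale structure and lets Lemma~\ref{lem:martingale} be invoked; it is also the source of the second $\exp(L'_\psi T)$ factor that, combined with the $\tfrac{1}{4}$ coefficient in Lemma~\ref{lem:martingale}, produces precisely the $\tfrac{1}{2}\exp(2L'_\psi T)$ coefficient of $E$ needed for the final absorption.
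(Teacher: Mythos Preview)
Your overall strategy is close to the paper's, but there is a genuine gap at the absorption step. From your pathwise iteration with weights $\gamma_{j,n}$, the signed inner-product sum produces (after summation by parts and your bound) a contribution $2\exp(2L'_\psi T)\max_m|M_m|$. Inserting \eqref{eq:consequence_BDG_inequality_Mk} then gives
\[
E \le \alpha + \tfrac{1}{2}\exp(2L'_\psi T)\,E,\qquad E=\bb{E}\bigl[\max_n\Abs{e_n}_H^2\bigr].
\]
Absorbing requires $\tfrac{1}{2}\exp(2L'_\psi T)<1$, i.e.\ $L'_\psi T<\tfrac{1}{2}\ln 2$, which is not assumed anywhere. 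Even with the sharper telescoping bound $\sum_j L'_\psi h_{j+1}\gamma_{j+1,n}=\gamma_{0,n}-1$ (giving $\exp(L'_\psi T)$ instead of $\exp(2L'_\psi T)$), the coefficient in front of $E$ is $\tfrac{1}{2}\exp(L'_\psi T)$, and absorption still fails once $L'_\psi T>\ln 2$. So the step ``absorbing the $E$ term \ldots\ yields the multiplicative factor $2$'' does not go through as stated.

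The paper avoids this by \emph{not} applying Gr\"onwall before taking expectations. It sums the recursion \eqref{eq:bound_abs_ekplus1_squared} as a telescoping series, leaving the undamped term $L'_\psi\sum_{j\le k}h_j\Abs{e_j}_H^2$ on the right; after taking the running maximum and expectations, the martingale bound \eqref{eq:consequence_BDG_inequality_Mk} contributes $\tfrac{1}{2}\bb{E}[\max_{j\le k+1}\Abs{e_j}_H^2]$ with \emph{no} exponential factor, and this can be absorbed for all $T$. Only then is the discrete Gr\"onwall inequality (Proposition~\ref{prop:Special_Gronwall_Inequality}) applied to the remaining sum $2L'_\psi\sum_j h_j\,\bb{E}[\max_{\ell\le j}\Abs{e_\ell}_H^2]$, producing the factor $\exp(2L'_\psi T)$ at the very end. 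In short: postpone Gr\"onwall until after the absorption, so that the coefficient of $E$ coming from Lemma~\ref{lem:martingale} is exactly $\tfrac{1}{2}$ rather than $\tfrac{1}{2}\exp(2L'_\psi T)$.
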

 \begin{proof}
   See Section \ref{ssec:proof_of_proposition_L2_max_error} for the proof.
 \end{proof}

\subsection{Error bounds of higher integrability order for independent and centred randomisation}
\label{ssec:higher_order_error_bounds_independent_centred_Gelfand}

It is natural to ask if one can prove the analogues of Lemma \ref{lem:max_L2_error_independent_centred_noise_Gelfand} or Proposition \ref{prop:L2_max_error_bound_independent_centred_noise_Gelfand} where we use $\vrt{\cdot}_{R}$, $R>2$, while keeping the same order in $h$.
Suppose that we wish to prove the analogue of Lemma \ref{lem:max_L2_error_independent_centred_noise_Gelfand} for $R=3$.
It follows from the triangle inequality and the definition \eqref{eq:rand_error_seq_v2} that 
\begin{equation*}
 \Abs{e_{k+1}}_H\leq \Abs{\varphi(h_k,t_k,u(t_k))-\psi(h_k,t_k,U_k)}_H+\Abs{\xi_k(h_k)}_H. 
\end{equation*}
Thus
\begin{equation*}
 \Abs{e_{k+1}}_H^3\leq \Abs{e_{k+1}}_H^2\left(\Abs{\varphi(h_k,t_k,u(t_k))-\psi(h_k,t_k,U_k)}_H+\Abs{\xi_k(h_k)}_H\right)
\end{equation*}
and substituting \eqref{eq:bound_abs_ekplus1_squared_preliminary} results in an upper bound on $\Abs{e_{k+1}}_H^3$ containing the mixed product of an inner product term and a norm term,
\begin{equation*}
 \Ang{\varphi(h_k,t_k,u(t_k))-\psi(h_k,t_k,U_k),\xi_k(h_k)}_H\Abs{\varphi(h_k,t_k,u(t_k))-\psi(h_k,t_k,U_k)}_H.
\end{equation*}
In general, this product will not vanish in expectation, because one can no longer exploit the commutativity of the inner product with the expectation operator. 
The same assertion is valid for $R\geq 3$.
This is the important difference between the $R=2$ case that was proven in Lemma \ref{lem:max_L2_error_independent_centred_noise_Gelfand} and the case $R\geq 3$. This difference implies that we must use the Cauchy--Schwarz inequality to bound products.
Using the Cauchy--Schwarz inequality yields
\begin{equation*}
 \Abs{e_{k+1}}_H^3\leq \sum_{i=0}^{3}\begin{pmatrix} 3 \\ i\end{pmatrix} \Abs{\varphi(h_k,t_k,u(t_k))-\psi(h_k,t_k,U_k)}_H^i\Abs{\xi_k(h_k)}_H^{3-i}.
\end{equation*}
We can obtain the same bound by applying the binomial theorem to the bound $\Abs{e_{k+1}}_H\leq \Abs{\varphi(h_k,t_k,u(t_k))-\psi(h_k,t_k,U_k)}_H+\Abs{\xi_k(h_k)}_H$.

If the stochastic processes $(\xi_k)_k$ are mutually independent, then we may use the second statement of Lemma \ref{lem:Uk_measurable_wrt_Fk}.
Assuming that $e_0=0$ almost surely and taking expectations of the summand for $i=2$ yields
\begin{align*}
& \bb{E}\left[\Abs{\varphi(h_k,t_k,u(t_k))-\psi(h_k,t_k,U_k)}_H^2\Abs{\xi_k(h_k)}_H\right]
\\
 &  = \bb{E}\left[\Abs{\varphi(h_k,t_k,u(t_k))-\psi(h_k,t_k,U_k)}_H^2\right]\bb{E}\left[\Abs{\xi_k(h_k)}_H\right] & \text{by independence}
 \\
 & \leq \left( \mathcal{O}(h_{k}^{2q+1})+(1+L'_\psi h_k)\bb{E}\left[\Abs{e_k}_H^2\right]\right) C_\xi h_k^{p+1} &\text{by \eqref{eq:squared_bound_abs_Phi_utk_minus_Psi_Uk}, Assumption \ref{asmp:Noise_regularity_Gelfand}}
 \\
 & \leq \left( \mathcal{O}(h_{k}^{2q+1})+ \mathcal{O}(h^{2q})+\mathcal{O}(h^{2p+1})\right)C_\xi h_k^{p+1}&\text{by Lemma \ref{lem:max_L2_error_independent_centred_noise_Gelfand}.}
\end{align*}
This yields a bound on $\Vrt{e_{k+1}}_3^{3}$ by a term that is $\mathcal{O}(h^{(2q)\wedge(2p+1)+p+1})$. 
Applying a discrete Gronwall inequality produces a bound on $\max_{k\in[N]}\vrt{e_k}_{3}^{3}$ that is $\mathcal{O}(h^{(2q)\wedge(2p+1)+p})$.
Since this upper bound on the exponent arises from the mixed product mentioned above, and since such mixed products will arise in any expansion of $\abs{e_{k+1}}_H^R$, we cannot expect to prove that $\max_{k}\vrt{e_k}_{R}=\mathcal{O}(h^{q\wedge (p+1/2)})$ for $R>2$ using the techniques that we applied earlier, even if the $(\xi_k)_{k}$ are mutually independent and centred.

For the $L^3$ analogue of Proposition \ref{prop:L2_max_error_bound_independent_centred_noise_Gelfand}, the fact that terms involving inner products do not vanish in expectation also poses a problem. 
This is because the proof of the $L^2$ case in Proposition \ref{prop:L2_max_error_bound_independent_centred_noise_Gelfand} relies on the bound \eqref{eq:consequence_BDG_inequality_Mk} in Lemma \ref{lem:martingale} on the martingale $(M_k)_k$. 
This bound in turn follows from the Burkholder--Davis--Gundy inequality for martingales \cite[Chapter IV, \S 4, Theorem (4.1)]{RevuzYor2009}. 
For the $L^3$ case, the expectations of products containing an inner product term do not vanish, because one can no longer exploit commutativity of the inner product with the expectation operator, due to the mixed product.
As a result, the martingale $(M_k)_k$ does not appear, and one cannot apply the Burkholder--Davis--Gundy inequality to prove a bound similar to \eqref{eq:consequence_BDG_inequality_Mk}. 
Instead, one must apply the Cauchy--Schwarz inequality or the binomial theorem, as we did above. 
This results in a bound on $\vrt{\max_k\Abs{e_k}_H}_{3}$ that is worse than $\mathcal{O}(h^{q\wedge (p+1/2)})$.

\subsection{Error bounds of higher integrability order without independence or centredness assumptions}
\label{ssec:higher_order_error_bounds_Gelfand}

In this section, we prove a strong error bound for a general Orlicz norm instead of for the $\vrt{\cdot}_2$-norm.
We use the same hypotheses as for Lemma \ref{lem:max_L2_error_independent_centred_noise_Gelfand} and Proposition \ref{prop:L2_max_error_bound_independent_centred_noise_Gelfand}, except that we do not assume mutual independence or centredness of the stochastic processes $(\xi_k)_{k\in\bb{N}_0}$.
\begin{theorem}
 \label{thm:LR_bound_max_error_Gelfand}
Suppose the following statements are true:
\begin{itemize}
 \item Assumption \ref{asmp:local_truncation_error_Lipschitz_numerical_flow_Gelfand} holds with parameters $h^\ast$, $q$, $C_{\varphi,\psi}$, $\mathcal{D}$, and $L_\psi$,
 \item Assumption \ref{asmp:Noise_regularity_Gelfand} holds with parameters $\Vrt{\cdot}_{\Psi}$, $p$, and $C_\xi$, and
 \item the initial condition $\vartheta$ of \eqref{eq:pde} belongs to $\mathcal{D}$, and $\vrt{U_0}_{\Psi}<\infty$.
\end{itemize}
Then for any time grid $(t_k)_k$ with $0<h\leq h^\ast$, the corresponding error sequence $(e_k)_k$ satisfies
 \begin{equation*}
  \Vrt{\max_k\Abs{e_k}_H}_{\Psi} \leq   \left(\Vrt{e_0}_{\Psi}+\Vrt{C_{\varphi,\psi}}_{\infty}h^q T+C_\xi h^p T\right)\exp\left(L_\psi T\right).
 \end{equation*}
\end{theorem}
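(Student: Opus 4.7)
The plan is to mirror the proof of Theorem \ref{thm:LR_bound_max_error_Banach} almost verbatim, with the single but crucial modification that the Lipschitz term is produced from the approximate flow $\psi$ rather than from the exact flow $\varphi$. In the Banach setting, $\varphi$ was assumed globally Lipschitz (Assumption \ref{asmp:Lipschitz_flow_map_Banach}); here, by contrast, it is $\psi$ that is globally Lipschitz by \eqref{eq:global_Lipschitz_approx_flow_gelfand}. Correspondingly, the telescoping argument should insert the intermediate point $\psi(h_k, t_k, u(t_k))$ rather than $\varphi(h_k,t_k,U_k)$.

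Starting from \eqref{eq:rand_error_seq_v2} and applying the triangle inequality with this modified intermediate term, I would write
\begin{align*}
\Abs{e_{k+1}}_H
&\leq \Abs{\varphi(h_k,t_k,u(t_k)) - \psi(h_k,t_k,u(t_k))}_H \\
&\quad + \Abs{\psi(h_k,t_k,u(t_k)) - \psi(h_k,t_k,U_k)}_H + \Abs{\xi_k(h_k)}_H.
\end{align*}
Since $\vartheta \in \mathcal{D}$ and $u(t_k) = \varphi(t_k,0,\vartheta)$, the first term lies inside the scope of the local truncation bound \eqref{eq:local_truncation_error_gelfand}, which together with \eqref{eq:C_Phi_Psi_infty} gives the bound $\Vrt{C_{\varphi,\psi}}_{\infty} h_k^{q+1}$. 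The second term is bounded by $(1+L_\psi h_k)\Abs{e_k}_H$ via \eqref{eq:global_Lipschitz_approx_flow_gelfand}. Combining these yields the pathwise recursion
\begin{equation*}
\Abs{e_{k+1}}_H \leq (1+L_\psi h_k)\Abs{e_k}_H + \Vrt{C_{\varphi,\psi}}_{\infty} h_k^{q+1} + \Abs{\xi_k(h_k)}_H,
\end{equation*}
which is exactly the analogue of \eqref{eq:decomp1_bound_e_kplus1_decomposition_using_exact_flow_map} with $L_\varphi$ replaced by $L_\psi$ and $C_{\varphi,\psi}$ replaced by $\Vrt{C_{\varphi,\psi}}_{\infty}$.

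From here the remainder of the proof proceeds identically to the proof of Theorem \ref{thm:LR_bound_max_error_Banach}. I would apply the discrete Gronwall inequality of Lemma \ref{lem:Special_Gronwall_Inequality_corollary} pathwise to obtain a bound on $\Abs{e_{k+1}}_H$ in terms of $\Abs{e_0}_H$, a deterministic truncation-error sum, and the sum $\sum_{j} \Abs{\xi_j(h_j)}_H$, all multiplied by $\exp(\sum_{j} L_\psi h_j) \leq \exp(L_\psi T)$ by \eqref{eq:time_grid}. Taking the maximum over $k$ gives the pathwise analogue of \eqref{eq:pathwise_bound_max_k_error_Banach}. Then I would take the $\Vrt{\cdot}_{\Psi}$-norm of both sides, invoking the triangle inequality, Assumption \ref{asmp:Noise_regularity_Gelfand}, and \eqref{eq:upper_bound_on_sum_time_steps_to_power_plus_one} to control $\sum_{j}\Vrt{\xi_j(h_j)}_{\Psi} \leq C_\xi h^p T$, yielding the claimed bound.

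There is no real obstacle here, since we have explicitly engineered Assumption \ref{asmp:local_truncation_error_Lipschitz_numerical_flow_Gelfand} so that the dense-subset hypothesis on $\mathcal{D}$, together with the assumption $\vartheta \in \mathcal{D}$, guarantees that \eqref{eq:local_truncation_error_gelfand} is applicable at every exact trajectory point $u(t_k)$. The only minor subtlety worth mentioning in the writeup is the shift from Lipschitz-in-$\varphi$ to Lipschitz-in-$\psi$: this is why the constant in the exponential is $L_\psi$ rather than some constant derived from $\varphi$, and it is the reason the truncation error must be measured at $u(t_k)$ rather than at $U_k$ (so that the regularity of the exact solution, rather than the unknown regularity of the random iterates, controls $C_{\varphi,\psi}$).
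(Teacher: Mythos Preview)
Your proposal is correct and follows essentially the same approach as the paper's proof: insert the intermediate point $\psi(h_k,t_k,u(t_k))$, apply \eqref{eq:local_truncation_error_gelfand} and \eqref{eq:global_Lipschitz_approx_flow_gelfand} to obtain the recursion \eqref{eq:decomp1_bound_e_kplus1_decomposition_using_numerical_flow_map}, then apply Lemma~\ref{lem:Special_Gronwall_Inequality_corollary} pathwise and take the $\Vrt{\cdot}_{\Psi}$-norm. Your commentary on the shift from Lipschitz-in-$\varphi$ to Lipschitz-in-$\psi$ is exactly the point the paper makes in the remark following the theorem.
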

In the results from Section \ref{ssec:higher_order_error_bounds_independent_centred_Gelfand}, we required that the maximal time step $h$ associated to the time grid satisfies $h\leq 1\wedge h^\ast$.
In Theorem \ref{thm:LR_bound_max_error_Gelfand}, we only require that $h\leq h^\ast$.
The discussion of exponential integrability in Remark \ref{rem:Orlicz_norm_shorter_proof} also applies to Theorem \ref{thm:LR_bound_max_error_Gelfand}.
\begin{proof}[Proof of Theorem \ref{thm:LR_bound_max_error_Gelfand}]
 Recall \eqref{eq:rand_error_seq_v2}:
\begin{equation*}
 e_{k+1}=\varphi(h_k,t_k,u(t_k))-\psi(h_k,t_k,U_k)-\xi_k(h_k),\quad k\in[N-1]_0.
\end{equation*}
By the triangle inequality, and by \eqref{eq:local_truncation_error_gelfand} and \eqref{eq:global_Lipschitz_approx_flow_gelfand} from Assumption \ref{asmp:local_truncation_error_Lipschitz_numerical_flow_Gelfand},
\begin{align*}
 & \Abs{\varphi(h_k,t_k,u(t_k))-\psi(h_k,t_k,U_k)}_H
 \\
 & \quad \leq \Abs{\varphi(h_k,t_k,u(t_k))-\psi(h_k,t_k,u(t_k))}_H+\Abs{\psi(h_k,t_k,u(t_k))-\psi(h_k,t_k,U_k)}_H
 \\
 & \quad \leq \Vrt{C_{\varphi,\psi}}_{\infty}h_k^{q+1}+(1+L_\psi h_k)\abs{e_k}_H.
\end{align*}
From this it follows that 
\begin{equation}
\label{eq:decomp1_bound_e_kplus1_decomposition_using_numerical_flow_map}
 \Abs{e_{k+1}}_H\leq \Vrt{C_{\varphi,\psi}}_{\infty}h_k^{q+1}+(1+L_\psi h_k)\Abs{e_k}_H+\Abs{\xi_k(h_k)}_H.
\end{equation}
Applying Lemma \ref{lem:Special_Gronwall_Inequality_corollary} and using the same arguments that yielded \eqref{eq:pathwise_bound_max_k_error_Banach}, we obtain the analogous pathwise bound
\begin{align*}
 \max_k\Abs{e_k}_H\leq \left(\Abs{e_0}_H+\Vrt{C_{\varphi,\psi}}_{\infty}h^{q}T+\sum_{k\in[N-1]_0}\Abs{\xi_k(h_k)}_H\right)\exp\left(L_\psi T\right).
\end{align*}
Taking the $\vrt{\cdot}_{\Psi}$ norm of both sides and applying Assumption \ref{asmp:Noise_regularity_Gelfand} completes the proof.
\end{proof}
\begin{remark}
 The inequality \eqref{eq:decomp1_bound_e_kplus1_decomposition_using_numerical_flow_map} in the proof of Theorem \ref{thm:LR_bound_max_error_Gelfand} closely resembles the inequality \eqref{eq:decomp1_bound_e_kplus1_decomposition_using_exact_flow_map}, which we used to prove Theorem \ref{thm:LR_bound_max_error_Banach}. The key difference results from adding $0=\psi(h_k,t_k,u(t_k))-\psi(h_k,t_k,u(t_k))$ before applying the triangle inequality to derive \eqref{eq:decomp1_bound_e_kplus1_decomposition_using_numerical_flow_map}; for \eqref{eq:decomp1_bound_e_kplus1_decomposition_using_exact_flow_map}, we added $0=\varphi(h_k,t_k,U_k)-\varphi(h_k,t_k,U_k)$ instead.
 The decomposition we use for \eqref{eq:decomp1_bound_e_kplus1_decomposition_using_numerical_flow_map} enables us to exploit the weaker local truncation error bound \eqref{eq:local_truncation_error_gelfand} in Assumption \ref{asmp:local_truncation_error_Lipschitz_numerical_flow_Gelfand} instead of the uniform local truncation error bound in Assumption \ref{asmp:Uniform_truncation_error_Banach}.
\end{remark}

\section{Example: Heat equation}
\label{sec:example}

Consider the heat equation on a $C^2$ bounded domain $D\subset\bb{R}^{d}$ with homogeneous Dirichlet boundary conditions
\begin{equation}
 \label{eq:heat_equation}
    u(0) = u_0, \quad \partial_t u - \text{div} (\mathcal{E} \nabla u ) = b \, \text{ on } [0,T] \times D,
\end{equation}
where $\mathcal{E}$ is elliptic. 
Upon multiplying the PDE by a test function and using integration by parts, the left-hand side of the PDE yields a bilinear form $a(u(t),v)$, which allows us to rewrite the problem above as the operator differential equation
\begin{equation}
 \label{eq:parabolic_evolution_equation}
    u(0)=u_0\in H,  \quad u'(t) + A u(t) = b \in V'\, 
\end{equation}
with spaces $H=L^2(D)$, $V=H^1_0(D)$, and $V'=H^{-1}(D)$. 
The bounded, linear operator $A\, : \, V \rightarrow V'$ is induced by the bilinear form $a(\cdot,\cdot)$ on $V\times V$ according to $a(u,v) = \ang{ Au, v }_{V'\times V}$, where $\ang{\cdot,\cdot}_{V'\times V}$ denotes the dual pairing.
For the particular PDE considered above, the operator $A$ is strongly positive with constant $\mu>0$ on $V\times V$. 

In this section, we will show that the results that we proved for the variational setting in Section \ref{sec:Gelfand} are valid for parabolic PDEs and the implicit Euler method, by showing that the conditions \eqref{eq:local_truncation_error_gelfand} and \eqref{eq:global_Lipschitz_approx_flow_gelfand} from Assumption \ref{asmp:local_truncation_error_Lipschitz_numerical_flow_Gelfand} are satisfied.
We shall consider the more general setting of parabolic PDEs with possibly time-dependent coefficients, because this analysis includes the setting of time-independent coefficients --- and hence the heat equation stated above --- as a special case.

Let $L(V,V')$ be the set of all linear mappings from $V$ to $V'$. 
Consider a mapping $a \colon [0,T] \times V \times V \to \mathbb{R}$ that is bilinear in the second and third argument.
This mapping induces a collection $(A(t))_t\subset L(V,V')$ according to 
\begin{equation*}
 \ang{ A(t) u(t), v }_{V'\times V} = a(t,u(t),v),\quad \forall v \in V.
\end{equation*}
Now we pose the following standard assumptions on $a$ and state their equivalent formulation in terms of $A$.
\begin{assumption}
\label{asmp:section_example_bilinear_form}
\begin{enumerate}
    \item For fixed $t$, $a(t,\cdot , \cdot)$ is a bilinear form, and for fixed $u,v\in V$, $a(\cdot, u,v)$ is measurable. Equivalently, for every $t$, $A(t)\in L(V,V')$ is linear and $t\mapsto A(t)$ is measurable.
    \item There exists $\beta>0$ such that for every $(t,u,v)$, $a(t,u,v) \leq \beta \Abs{ u }_V \abs{ v}_V$.
    Equivalently, for every $t$ we have $\Vrt{ A(t)}_{L(V,V')} \leq \beta$.
    \item A G{\aa}rding inequality holds, i.e.\ there exist $\mu>0$, $\kappa\geq 0$ such that 
    \begin{equation}
        \label{eq:positivity}
        a(t,u,u) \geq \mu \Abs{u}_V^2 - \kappa \Abs{ u}_H^2, \quad \forall (t,u)\in [0,T]\times V.
    \end{equation}
    Equivalently, for every $t\in [0,T]$, $A(t) + \kappa I \in L(V,V')$ is strongly positive.
\end{enumerate}
\end{assumption}

For the special case of the heat equation \eqref{eq:heat_equation} where $\mathcal{E}$ is the identity matrix, the first statement of Assumption \ref{asmp:section_example_bilinear_form} holds since $\mathcal{E}$ is constant.
By definition of the bilinear form $a$ and the spaces $H$ and $V$, the second statement holds with $\beta=1$, and the third statement holds with equality for $\kappa=0$ and $\mu=1$.

Consider the implicit Euler scheme
\begin{equation}
   \psi(h,t,v)\coloneqq (I+h \bar{A}_{h,t})^{-1}(h\bar{b}_{h,t}+v),
   \label{eq:implicit_Euler_method_Steklov_averages}
\end{equation}
for $0<h\leq h^\ast$, $0\leq t\leq T-h$ and $v\in H$.
We specify an interval of suitable values of $h^\ast$ in Section \ref{ssec:lipschitz_condition_on_approximate_flow_map}.
Above, $\bar{A}_{h,t}$ and $\bar{b}_{h,t}$ denote Steklov time averages of the linear operators $(A(t))_{t}$ and the right-hand side $b$ respectively,
\begin{equation*}
   \bar{A}_{h,t}\coloneqq \frac{1}{h}\int_{t}^{t+h}A(s)\ud s,\quad \bar{b}_{h,t}\coloneqq \frac{1}{h}\int_{t}^{t+h}b(s)\ud s,
%    \label{eq:Steklov_averages}
\end{equation*}
where the integrals in the definitions of $\bar{A}_{h,t}$ and $\bar{b}_{h,t}$ are Bochner--Lebesgue integrals in $L(V,V')$ and $V'$ respectively.
The existence of $\psi(h,t,v) \in V$ for $(h\bar{b}_{h,t}+v) \in V'$ is guaranteed by the Lax--Milgram theorem; see e.g.\ \cite[Section 6.2]{Ciarlet2013}.
For every suitable $(h,t)$, the operator $\bar{A}_{h,t}$ inherits the properties of $A$ stated in Assumption \ref{asmp:section_example_bilinear_form}.

For the heat equation \eqref{eq:heat_equation}, $t\mapsto A(t)$ and $t\mapsto b(t)$ are constant.
Therefore, $\bar{A}_{h,t}=A$ and $\bar{b}_{h,t}=b$, and \eqref{eq:implicit_Euler_method_Steklov_averages} simplifies to $\psi(h,t,v)\coloneqq (I+h A)^{-1}(hb+v)$.

\subsection{Local truncation error condition}
\label{ssec:local_truncation_error_condition}

We verify the local truncation error condition \eqref{eq:local_truncation_error_gelfand} in Assumption \ref{asmp:local_truncation_error_Lipschitz_numerical_flow_Gelfand}, for $\psi$ as given in \eqref{eq:implicit_Euler_method_Steklov_averages}.
Recall the definition \eqref{eq:det_exact_seq} of $(u(t_k))_{k}$ and that $(u_k)_{k}$ is defined by $u_0=\vartheta$, $u_{k+1} \coloneqq \psi(h_k, t_k,u_k)$ for $k\in[N-1]_0$.
Under the assumption that $(b-u')' \in L^2(0,T;V')$, the result \cite[Satz 8.3.6]{Emmrich2004} yields for any initial condition $\vartheta\in H$
\begin{equation*}
    \Abs{ u_k - u(t_k) }_H^2 + \mu \sum_{j=1}^k h_j \Abs{ u_j - u(t_j) }_V^2 \leq \frac{h^2}{3 \mu} \Abs{ (b-u')'}_{L^2(0,T;V')}^2,
\end{equation*}
where $\mu$ is the constant from positivity assumption on $A$ \eqref{eq:positivity}. 
Thus, \eqref{eq:local_truncation_error_gelfand} holds with $q=0$ and $C_{\varphi,\psi}(t,x)=(3\mu)^{-1/2}\abs{ (b-u')'}_{L^2(0,T;V')}$ for all $(t,x)$.

One can obtain numerical methods of higher order $q$, by assuming higher regularity of the solution.
For example, \cite[Theorems 4.2, 4.3, 4.4]{Lubich1995A} assume $u,u',u'' \in \mathcal{W}^2(0,T) $, and show the existence of a numerical method $\psi$ that satisfies \eqref{eq:local_truncation_error_gelfand} with $q=1$. 
For a general result dealing with arbitrary regularity $u^{(k+1)} \in \mathcal{W}^2(0,T)$ and numerical method of order $q=k$, see \cite[Theorem 3.2]{Lubich1995B}.

\subsection{Lipschitz condition on approximate flow map}
\label{ssec:lipschitz_condition_on_approximate_flow_map}

Next, we verify the Lipschitz condition \eqref{eq:global_Lipschitz_approx_flow_gelfand} for $\psi$ given in \eqref{eq:implicit_Euler_method_Steklov_averages}, and determine an interval of suitable values for the upper bound $h^\ast$ on the time step of the implicit Euler scheme.
Fix $0<h\leq h^\ast$, $t\in[0,T-h]$, and $u_0,v_0\in V$.
Test $w_1\coloneqq \psi(h,t,u_0)-\psi(h,t,v_0)\in V$ with $w_0\coloneqq u_0-v_0\in H$. 
Then
\begin{align*}
    \frac{1}{2h} (\Abs{w_1}_H^2 -\Abs{w_0}_H^2) \leq & \Ang{\frac{w_1-w_0}{h} , w_1}_{H} \leq - \Ang{\bar{A}_{h,t} w_1 , w_1}_{V'\times V}
    \\
    \leq& - \mu \Abs{ w_1}_V^2 + \kappa \Abs{w_1}_H^2.
\end{align*}
The first inequality follows from rearranging $0\leq \abs{w_{1}+w_{0}}_{H}^{2}$. 
The second inequality follows since  \eqref{eq:implicit_Euler_method_Steklov_averages} is equivalent to $h^{-1}(\psi(h,t,u_0)-u_0)=\bar{b}_{h,t}-\bar{A}_{h,t} \psi(h,t,u_0)$.
The third inequality holds because $\bar{A}_{h,t}$ inherits the positivity property \eqref{eq:positivity} from $A$.
Using $(2h)^{-1}(\abs{w_1}_{H}^{2}-\abs{w_0}_{H}^{2})\leq -\mu\abs{w_1}_{V}^{2}+\kappa\abs{w_1}_{H}^{2}$ and the definitions of $w_1$ and $w_0$, we obtain
\begin{equation*}
     \Abs{\psi(h,t,u_0)-\psi(h,t,v_0)}_H^2 \leq (1-2h\kappa)^{-1}\Abs{u_0-v_0}_H^2.
\end{equation*}
If $\kappa\leq 0$, then $ (1-2h\kappa)^{-1}\leq (1+L_\psi h)$ for any $L_\psi>0$ and $h>0$.
Therefore, suppose that $\kappa>0$. 
If the bound above on $\Abs{\psi(h,t,u_0)-\psi(h,t,v_0)}_H^2$ holds for all $0<h\leq h^\ast$, then we must have $h^\ast<(2\kappa)^{-1}$. 
In fact, if $h^\ast<(2\kappa)^{-1}$, then $L_\psi\coloneqq [(2\kappa)^{-1}-h^\ast]^{-1}$ is equivalent to $h^\ast=\tfrac{L_\psi-2\kappa}{2\kappa L_\psi}$. In this case, $0<h\leq h^\ast$ is equivalent to
\begin{equation*}
      h^2(2\kappa L_\psi)\leq h(L_\psi-2\kappa)\Leftrightarrow 1\leq (1+L_\psi h)(1-2h\kappa)\Leftrightarrow (1-2h\kappa)^{-1} \leq 1+L_\psi h.
\end{equation*}
Hence, the implicit Euler scheme \eqref{eq:implicit_Euler_method_Steklov_averages} satisfies condition \eqref{eq:global_Lipschitz_approx_flow_gelfand} in Assumption \ref{asmp:local_truncation_error_Lipschitz_numerical_flow_Gelfand}. 

\section{Conclusion}
\label{sec:conclusion}

In this paper, we proved strong error bounds for general Orlicz norms for randomised time integration methods applied to operator differential equations, using possibly non-uniform time grids. 
Our work builds on the ideas and approaches of \cite{Conrad2017,LieStuartSullivan2019}.
We show that the proof techniques of the key error bounds contained therein can be applied in more general settings, where the differential equation is formulated on a possibly infinite-dimensional Banach or Hilbert space, and the numerical time integration method is applied to a possibly non-uniform time grid.
Our work has two additional novel aspects relative to \cite{Conrad2017,LieStuartSullivan2019}.

First, we use a different error decomposition to bound the one-step error.
Our error decomposition enables us to replace the strong assumption of uniform local truncation error with a weaker assumption on the local truncation error.
This is important, because it is known that the strong assumption of uniform local truncation error is invalid even when the linear operator $A$ in the operator differential equation generates an analytic semigroup \cite[Theorem 7.1]{Thomee2006}. 
For the implicit Euler method, and for a large class of examples that includes the standard heat equation, we showed that our weaker local truncation error assumption is reasonable.

Second, we consider more general Orlicz norms instead of $L^R$-norms.
Previous results concerning higher-order error bounds --- for example, \cite[Theorem 3.5]{LieStuartSullivan2019} --- were less direct: they involved finding bounds on the $L^R$ error for each $R\in\bb{N}$ and using the series expansion of the exponential function.
The use of Orlicz norms leads to shorter and conceptually simpler proofs of our main results, Theorem \ref{thm:LR_bound_max_error_Banach} and Theorem \ref{thm:LR_bound_max_error_Gelfand}, by exploiting the fact that the random approximating sequence $(U_k)_k$ inherits the integrability properties of the collection $(\xi_k)_k$.  

\section*{Acknowledgements}

The research of HCL and MS has been partially funded by the Deutsche Forschungsgemeinschaft (DFG) --- Project-ID 318763901 --- SFB1294. The authors thank the anonymous reviewer for their constructive and helpful feedback.

\appendix

\section{Taylor expansion in Banach Spaces}\label{app:Taylor}
The following version of Taylor's theorem in Banach spaces is given in \cite[Theorem 7.9-1]{Ciarlet2013}.
\begin{theorem}
    Let $V$ and $W$ be normed vector spaces, let $U$ be an open subset of $V$, let $[a,a+h]$ be a closed segment contained in $U$, let $f \colon U \rightarrow W$ be a given mapping, and let $m\in \mathbb{N}$.
    \begin{enumerate}[(a)]
        \item (Taylor-Young) If $f$ is $(m-1)$ times differentiable on $U$ and $m$ times differentiable at $a \in U$, then
        \begin{equation*}
            f(a+h) = f(a) + f'(a) h + \ldots + \frac{1}{m!} f^{(m)} (a) h^m + \Vrt{ h }_V^m \delta (h)
        \end{equation*}
        with $\lim_{h\rightarrow 0} \delta (h) = 0$.
        \item (Integral remainder) If $W$ is a Banach space and $f$ is $m$ times continuously differentiable on $U$, then
        \begin{align*}
            f(a+h) &= f(a) + f'(a) h + \ldots + \frac{1}{(m-1)!} f^{(m-1)} (a) h^{m-1}\\
            & \phantom{=} \quad + \frac{1}{(m-1)!} \int_0^1 (1-t)^{m-1} \left( f^{(m)}(a+th) h^m \right) \,\mathrm{d}t.
        \end{align*}
    \end{enumerate}
\end{theorem}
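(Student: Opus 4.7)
The plan is to prove the two parts in a different order from how they are stated: part (b), which assumes the stronger hypothesis of continuous $m$-th differentiability, admits a clean direct proof by iterated integration by parts on the univariate composition $t \mapsto f(a+th)$, and I will treat it first. Part (a) only assumes the existence of the $m$-th derivative at the single point $a$, so I will handle it by induction on $m$ combined with the mean value inequality for Banach-space-valued maps.

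For part (b), I would define $\phi \colon [0,1] \to W$ by $\phi(t) \coloneqq f(a+th)$. Since $f$ is $m$ times continuously differentiable on the segment $[a,a+h] \subset U$, the chain rule in Banach spaces gives $\phi^{(k)}(t) = f^{(k)}(a+th)h^k$ for $k\in\{0,1,\ldots,m\}$, and each $\phi^{(k)}$ is continuous on $[0,1]$ with values in the Banach space $W$, so the Bochner integrals in what follows are well defined. The fundamental theorem of calculus gives $\phi(1)-\phi(0)=\int_0^1 \phi'(t)\ud t$. I would then iterate integration by parts with the antiderivative $-(1-t)^{k}/k!$ of $(1-t)^{k-1}/(k-1)!$, producing after $k$ steps
\begin{equation*}
 \phi(1) = \sum_{j=0}^{k-1} \frac{1}{j!}\phi^{(j)}(0) + \frac{1}{(k-1)!}\int_0^1 (1-t)^{k-1}\phi^{(k)}(t)\ud t.
\end{equation*}
Setting $k=m$ and substituting $\phi^{(j)}(0)=f^{(j)}(a)h^j$ and $\phi^{(m)}(t)=f^{(m)}(a+th)h^m$ yields the claimed integral remainder formula. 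The only subtlety here is justifying integration by parts for $W$-valued continuous functions, which follows from the Leibniz rule for Bochner integrals paired with continuous scalar functions.

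For part (a), I would argue by induction on $m$. The base case $m=1$ is exactly the definition of the Fréchet derivative of $f$ at $a$: $f(a+h)-f(a)-f'(a)h = \Vrt{h}_V \delta(h)$ with $\delta(h)\to 0$ as $h\to 0$. For the inductive step, assume the result is known for order $m-1$ and consider $f$ satisfying the hypothesis for order $m$. Define the remainder
\begin{equation*}
 g(h) \coloneqq f(a+h) - \sum_{k=0}^{m} \frac{1}{k!} f^{(k)}(a) h^k,
\end{equation*}
so that $g(0)=0$ and, differentiating term by term on $U$ (here I use that $f$ is $(m-1)$ times differentiable in a neighbourhood of $a$), the Fréchet derivative satisfies
\begin{equation*}
 g'(h) = f'(a+h) - \sum_{k=0}^{m-1} \frac{1}{k!} f^{(k+1)}(a) h^k.
\end{equation*}
Applying the inductive hypothesis to $f'$, which is $(m-2)$ times differentiable on $U$ and $(m-1)$ times differentiable at $a$, yields $\Vrt{g'(h)}_{L(V,W)} = \Vrt{h}_V^{m-1}\varepsilon(h)$ with $\varepsilon(h)\to 0$. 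Then I would apply the Banach-space mean value inequality
\begin{equation*}
 \Vrt{g(h) - g(0)}_W \leq \Vrt{h}_V \sup_{t\in [0,1]} \Vrt{g'(th)}_{L(V,W)},
\end{equation*}
which bounds the right-hand side by $\Vrt{h}_V^m \sup_{t\in[0,1]}\varepsilon(th)$. Setting $\delta(h)$ equal to this supremum (times $1/m!$-normalisation absorbed already) and observing that $\sup_{t\in[0,1]}\varepsilon(th)\to 0$ as $h\to 0$ completes the induction.

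The main obstacle I anticipate is the mean value inequality step in part (a), because unlike the scalar case there is no Rolle-type equality available in a general Banach space $W$. One needs the inequality form, which is typically established either by the Hahn--Banach reduction (testing $g(h)-g(0)$ against norming functionals in $W'$ and applying the scalar mean value theorem) or by a direct argument using continuity of $g'$ along the segment. Once that tool is in hand, both parts reduce to mechanical bookkeeping: iterated integration by parts in (b), and induction combined with the inductive hypothesis applied to $f'$ in (a).
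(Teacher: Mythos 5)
The paper does not prove this theorem --- it is stated in Appendix~A only as a citation of \cite[Theorem 7.9-1]{Ciarlet2013}, so there is no in-paper argument to compare against. Your proof is correct and follows the standard textbook route (and, as far as one can tell, the same strategy as the cited source): for the integral-remainder form you reduce to the one-variable function $\phi(t)=f(a+th)$ and iterate integration by parts, and for the Taylor--Young form you induct on $m$, apply the inductive hypothesis to $f'$, and close with the Banach-space mean value inequality. Both reductions are sound, and you have correctly identified the one genuinely nontrivial ingredient, namely the mean value inequality $\Vrt{g(h)-g(0)}_W \leq \Vrt{h}_V\sup_{t\in[0,1]}\Vrt{g'(th)}_{L(V,W)}$, which replaces the Rolle-type argument that is unavailable for $W$-valued maps.

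Two small points worth tightening if you were to write this out in full. First, in the inductive step you differentiate the polynomial $h\mapsto \sum_{k=0}^m \tfrac{1}{k!}f^{(k)}(a)h^k$; obtaining $\tfrac{1}{(k-1)!}f^{(k)}(a)h^{k-1}$ as the derivative of the $k$-th term uses the symmetry of the $k$-linear map $f^{(k)}(a)$, i.e.\ the Banach-space Schwarz theorem, which should be invoked explicitly. Second, $\delta(h)$ should be defined as $g(h)/\Vrt{h}_V^m$ for $h\neq 0$ (and $\delta(0)\coloneqq 0$) rather than ``equal to the supremum'': $\delta$ must be $W$-valued, whereas the supremum $\sup_{t\in[0,1]}\Vrt{\varepsilon(th)}_{L(V,W)}$ is a scalar. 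Your bound then shows $\Vrt{\delta(h)}_W \leq \sup_{t\in[0,1]}\Vrt{\varepsilon(th)}_{L(V,W)}\to 0$, which is the claim. Neither point changes the structure of the argument.
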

The noteworthy differences to the standard Taylor theorem in $\mathbb{R}$ are: 1) differentiability of $f$ may be slightly more complicated; 2) the $k$-th derivative of $f$ is a $k$-linear mapping from $\Pi_{i=1}^k U$ to $W$, taking $k$ inputs from $U$, denoted by $h^k = (h,\ldots , h) \in U^k$.

In addition, one can derive a Taylor expansion almost everywhere for weakly differentiable functions.

\section{Additional material for Section \ref{sec:Banach}}
\label{app:one-step}

 In the setting of time integration for ODEs, one usually requires higher regularity of $f$ or equivalently higher regularity for the solution $u$ in order to achieve an order of $q\geq 1$ for the truncation error \cite[Section III.2, Theorem 2.4]{Hairer1993}. 
 The purpose of this section is to show that the same ideas apply in the infinite-dimensional Banach space setting.
 Below, the function $f$ refers to the vector field in \eqref{eq:operator_differential_equation}.
 
For the next lemma, we consider for a general explicit Euler one-step method the map $\psi \colon  [0,h]\times \mathbb{R} \times V \rightarrow V$ in a Banach space $(V,\Abs{\cdot}_V)$, associated to some step function $\Upsilon$:
\begin{align}\label{eq:one_step_Upsilon}
	u_{k+1} &\coloneqq \psi(h_k,t_k,u_k)\coloneqq u_k + h_k \Upsilon(h_k,t_k,u_k) \\
	&= u_k +h_k  \left( a_1 f(t_k,u_k)  + a_2f\left(t_k+b_1 h_k,t_k,u_k + b_2 h_k f(t_k,u_k) \right) \right)\nonumber
\end{align}
with $a_1,a_2,b_1,b_2 \geq 0$.
\begin{lemma}[Lipschitz property of the numerical flow map]
	Let $\psi$ be as in \eqref{eq:one_step_Upsilon}.
	If $f \colon [0,T] \times V \rightarrow V$ is Lipschitz continuous in the second argument, then the approximate flow map $\psi(h_k,t_k,\cdot)$ is Lipschitz continuous, uniformly in $k$.
\end{lemma}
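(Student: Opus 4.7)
The plan is to expand $\psi(h_k,t_k,u)-\psi(h_k,t_k,v)$ using its defining formula \eqref{eq:one_step_Upsilon}, bound the two $f$-difference terms separately by invoking the Lipschitz hypothesis, and then combine. Concretely, let $L>0$ denote the Lipschitz constant of $f$ in its second argument, and fix $u,v\in V$ and $k$ with $h_k\in[0,h]$. Writing
\begin{equation*}
\psi(h_k,t_k,u)-\psi(h_k,t_k,v) = (u-v) + h_k\bigl[\Upsilon(h_k,t_k,u)-\Upsilon(h_k,t_k,v)\bigr],
\end{equation*}
the triangle inequality in $V$ reduces the task to bounding $|\Upsilon(h_k,t_k,u)-\Upsilon(h_k,t_k,v)|_V$.

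Next I would estimate the two stages of the Runge--Kutta-type increment. The first term $a_1[f(t_k,u)-f(t_k,v)]$ is directly bounded by $a_1 L|u-v|_V$ using the Lipschitz hypothesis. For the second term, I would first estimate the difference of the second arguments:
\begin{equation*}
\bigl|\bigl(u+b_2 h_k f(t_k,u)\bigr)-\bigl(v+b_2 h_k f(t_k,v)\bigr)\bigr|_V \leq (1+b_2 L h_k)\,|u-v|_V,
\end{equation*}
again by the Lipschitz hypothesis applied to $f(t_k,\cdot)$. Composing once more with the Lipschitz bound in the second argument of $f(t_k+b_1 h_k,\cdot)$ gives the bound $a_2 L(1+b_2 L h_k)|u-v|_V$ for the second stage. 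Summing the two contributions yields
\begin{equation*}
|\Upsilon(h_k,t_k,u)-\Upsilon(h_k,t_k,v)|_V \leq \bigl(a_1 L + a_2 L + a_2 b_2 L^2 h_k\bigr)\,|u-v|_V.
\end{equation*}

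Combining this with the opening identity and using $h_k\leq h$ (which, via the time grid \eqref{eq:time_grid}, is a fixed upper bound independent of $k$) gives
\begin{equation*}
|\psi(h_k,t_k,u)-\psi(h_k,t_k,v)|_V \leq \bigl(1+h_k(a_1+a_2)L + a_2 b_2 L^2 h_k^2\bigr)\,|u-v|_V,
\end{equation*}
so the Lipschitz constant is at most $1+(a_1+a_2)Lh + a_2 b_2 L^2 h^2$, which depends only on $L$, $a_1,a_2,b_1,b_2$ and the global bound $h$, hence is uniform in $k$. There is no real obstacle here: the proof is a straightforward double application of the Lipschitz hypothesis together with the triangle inequality, and the Banach-space setting plays no role beyond the fact that $|\cdot|_V$ is a norm. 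The only minor subtlety is recognising that the inner stage $u\mapsto u+b_2 h_k f(t_k,u)$ must itself be shown Lipschitz before composing with $f(t_k+b_1 h_k,\cdot)$, which is exactly the step isolated above.
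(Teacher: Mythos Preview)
Your proof is correct and follows essentially the same approach as the paper: both expand $\psi$ via \eqref{eq:one_step_Upsilon}, apply the Lipschitz hypothesis on $f$ once for each Runge--Kutta stage, and arrive at the Lipschitz constant $1+h_k\bigl((a_1+a_2)L+a_2b_2L^2h_k\bigr)$. The paper compresses this into a single line, whereas you spell out the intermediate bound on the inner stage $u\mapsto u+b_2h_kf(t_k,u)$, but the arguments are identical.
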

\begin{proof}
    We use the Lipschitz property of $f$ multiple times to get
	\begin{equation}
		\Abs{ \psi(\tau,t,u) - \psi(\tau,t,v) }_V \leq (1+ L \tau (a_1 L + a_2 L + a_2b_2 L^2 \tau)) \Abs{ u - v}_V \; .
	\end{equation}
\end{proof}
The following theorem is taken from \cite[Theorem 7.1.5]{Plato2010}. The proof there is only done in one dimension.
\begin{theorem}
	Assuming that the right hand side $f$ belongs to $C^2([0,T]\times V;V)$ and equivalently that the solution $u$ is $C^3$, it follows that any explicit one-step scheme with step function $\Upsilon$ as in \eqref{eq:one_step_Upsilon} with
	\begin{equation*}
	    a_1+a_2 = 1 \, , \, a_2 b_1 = \frac{1}{2} \, , \, a_2 b_2 = \frac{1}{2}
	\end{equation*}
	satisfies assumption \ref{asmp:Uniform_truncation_error_Banach} with $q = 2$.
\end{theorem}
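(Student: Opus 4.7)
The plan is a standard Taylor-expansion consistency analysis, adapted to the Banach space setting using the Taylor formula of Appendix \ref{app:Taylor}.

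First, I would Taylor-expand the exact flow $\varphi(h,t,v)$ to third order in $h$. Differentiating the integral identity \eqref{eq:exact_flow_map} yields $\partial_h \varphi(0,t,v) = f(t,v)$ and $\partial_h^2 \varphi(0,t,v) = (\partial_t f + Df \cdot f)(t,v)$, where $Df$ is the Fr\'echet derivative of $f(t,\cdot)$. Since $f\in C^2$ forces $u\in C^3$, the integral-remainder form of Taylor's theorem in Banach spaces gives
\begin{equation*}
 \varphi(h,t,v) = v + h f(t,v) + \tfrac{h^2}{2}\bigl(\partial_t f + Df \cdot f\bigr)(t,v) + R_\varphi(h,t,v),
\end{equation*}
with $\Vrt{R_\varphi(h,t,v)}_V$ controlled by $h^3$ times an expression in $\partial_t^2 f$, $\partial_t Df$, $D^2 f$ and $f$, evaluated along the trajectory.

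Second, I would expand $\psi(h,t,v) = v + h\bigl(a_1 f(t,v) + a_2 f(t+b_1 h,\, v + b_2 h f(t,v))\bigr)$ by applying the bivariate Banach-space Taylor expansion to the second $f$-evaluation, with time increment $b_1 h$ and state increment $b_2 h f(t,v)$. This produces
\begin{equation*}
 a_2 f\bigl(t+b_1 h,\, v+b_2 h f(t,v)\bigr) = a_2 f(t,v) + a_2 h\bigl(b_1\partial_t f + b_2 Df\cdot f\bigr)(t,v) + R_\psi(h,t,v),
\end{equation*}
with $\Vrt{R_\psi(h,t,v)}_V = \mathcal{O}(h^2)$, hence
\begin{equation*}
 \psi(h,t,v) = v + h(a_1{+}a_2)\, f(t,v) + h^2\bigl(a_2 b_1\,\partial_t f + a_2 b_2\, Df \cdot f\bigr)(t,v) + h R_\psi(h,t,v).
\end{equation*}

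Third, I would use the three order conditions $a_1+a_2=1$, $a_2 b_1 = 1/2$, $a_2 b_2 = 1/2$ to see that the $h^0$, $h^1$, and $h^2$ terms of $\varphi$ and $\psi$ match identically. Subtracting then leaves $\varphi(h,t,v) - \psi(h,t,v) = R_\varphi(h,t,v) - h R_\psi(h,t,v)$, which is $\mathcal{O}(h^3) = \mathcal{O}(h^{q+1})$ with $q=2$, giving Assumption \ref{asmp:Uniform_truncation_error_Banach}.

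The main obstacle is the uniformity in $(t,v)\in [0,T-h]\times V$ required by Assumption \ref{asmp:Uniform_truncation_error_Banach}. The remainders $R_\varphi$ and $R_\psi$ are integrals of $\partial_t^2 f$, $\partial_t Df$, $D^2 f$, and compositions with $f$, evaluated on segments between $(t,v)$ and nearby points. Mere $C^2$ regularity of $f$ on $[0,T]\times V$ does not, in an infinite-dimensional Banach space, imply that these derivatives are globally bounded. To obtain an $(t,v)$-independent constant $C_{\varphi,\psi}$ one should therefore either tacitly strengthen the hypothesis to uniform boundedness of $f$ and of its spatial and temporal derivatives up to order $2$ on $[0,T]\times V$, or restrict the statement to the $\psi$-invariant set of points lying on trajectories of interest; with such a bound in hand, the explicit integral remainders give $C_{\varphi,\psi}$ as a fixed polynomial in the relevant sup-norms, and the conclusion is immediate.
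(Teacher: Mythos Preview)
Your proposal is correct and follows essentially the same Taylor-expansion consistency argument as the paper: expand the exact flow and the scheme to second order in $h$, match the $h^0$, $h^1$, $h^2$ coefficients via the order conditions, and bound the remainder by $\mathcal{O}(h^3)$. Your explicit discussion of the uniformity issue in $(t,v)$ is a point the paper's proof leaves implicit.
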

\begin{proof}
	We assume that $f \in C^2([0,T]\times V;V)$ in order to be able to use Taylor expansion. We first expand the step function $\Upsilon$:
	\begin{equation*}
	    \Upsilon (h,t,u(t)) = \left[ (a_1+a_2) f(t,u(t)) + h \left( a_2 b_1  \frac{\partial f}{\partial t} (t,u(t)) + a_2 b_2  \frac{\partial f}{\partial u }  f(t,u(t))  \right) \right] + \mathcal{O} (h^2).
	\end{equation*}
	Above, $\tfrac{\partial f}{\partial u}$ refers to the Gateaux derivative of $f$ with respect to $u$, and $\tfrac{\partial f}{\partial u }  f (t,u(t))$ denotes the linear mapping $\tfrac{\partial  f}{\partial u} (t,u(t))$ from $V$ to $V$ acting on $f(t,u(t)) \in V$. 
	Expanding $u$, we have
	\begin{align*}
	    u(t+h) &= u(t) + u'(t) h + u''(t) \frac{h^2}{2} + \mathcal{O}(h^3)\\
	    &= u(t) + \left[ h f(t,u(t)) + \frac{h^2}{2} \left( \frac{\partial f}{\partial t} (t, u(t)) + \frac{\partial f}{\partial u} (t,u(t)) f(t,u(t)) \right) \right]+ \mathcal{O} (h^3)\\
	    &= u(t) +\left[ h \Upsilon (h,t,u(t)) + \mathcal{O}(h^3) \right] + \mathcal{O}(h^3) \; .
	\end{align*}
	The last equality follows from the conditions on the coefficients $a_1,b_1,a_2,b_2$. This gives consistency of order $2$:
	\begin{equation*}
	    u (t+h) - (u (t) + h \Upsilon (h,t,u(t)) ) = \varphi(h,t,u(t)) - \psi (h,t,u(t)) =  \mathcal{O}(h^3).
	\end{equation*}
	Convergence follows from the Lipschitz continuity of $f$ with respect to the second argument \cite[Theorem 7.10]{Plato2010}. 
\end{proof}

\section{Discrete Gronwall inequalities}

The following statement is given in \cite[Lemma 1.6]{MilsteinTretyakov2004}.
\begin{lemma}
\label{lem:MilsteinTretyakov_GronwallLemma}
 Let $T>0$ be fixed. Let $N\in\bb{N}$ and $h=T/N$. Suppose $(y_k)_k\in[0,\infty)^{\bb{N}_0}$ is such that for some $A,B\geq 0$ and $p\geq 1$,
 \begin{equation*}
  y_{k+1}\leq (1+Ah)y_k+Bh^p,\quad k\in[N-1]_0.
 \end{equation*}
 Then
 \begin{equation*}
  y_k\leq e^{AT}y_0+\tfrac{B}{A}(e^{AT}-1)h^{p-1},\quad k\in [N-1]_0
 \end{equation*}
where for $A=0$, $A^{-1}(e^{AT}-1)=0$.
\end{lemma}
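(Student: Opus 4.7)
The plan is to prove this discrete Gronwall inequality by a direct induction to unroll the recursion, followed by elementary bounds on a geometric-series tail. The argument is classical; the main task is careful bookkeeping of the exponent and the distinction between the cases $A>0$ and $A=0$.

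First, I would establish by induction on $k\in[N]_0$ the closed-form bound
\begin{equation*}
 y_k \le (1+Ah)^k y_0 + Bh^p \sum_{j=0}^{k-1}(1+Ah)^j,
\end{equation*}
with the empty sum interpreted as zero when $k=0$. The base case is trivial, and the inductive step follows immediately by applying the hypothesis $y_{k+1}\le (1+Ah)y_k + Bh^p$ to the inductive bound for $y_k$ and then factoring. Next I would control the leading geometric factor using the universal inequality $1+x\le e^x$ for $x\ge 0$: since $k\le N$ and $Nh=T$, this yields $(1+Ah)^k\le e^{Akh}\le e^{AT}$, producing the first term $e^{AT}y_0$ of the target bound.

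For the second term, in the case $A>0$, I would use the explicit geometric-series identity
\begin{equation*}
 \sum_{j=0}^{k-1}(1+Ah)^j = \frac{(1+Ah)^k - 1}{Ah} \le \frac{e^{AT}-1}{Ah},
\end{equation*}
and then multiply by $Bh^p$ to obtain $\tfrac{B}{A}(e^{AT}-1)h^{p-1}$, which matches the claim. The hypothesis $p\ge 1$ ensures $h^{p-1}$ is a legitimate nonnegative power and plays no deeper role. In the degenerate case $A=0$, the recursion collapses to $y_{k+1}\le y_k + Bh^p$, whose iteration gives $y_k\le y_0 + kBh^p$, and then $k\le N$ together with $Nh=T$ yields $y_k\le y_0 + TBh^{p-1}$, which is the natural $A\downarrow 0$ limit of the formula (obtained from $A^{-1}(e^{AT}-1)\to T$).

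I do not foresee any real obstacle: the argument is purely algebraic once the recursion is unrolled. The one subtlety worth flagging in the write-up is the interpretation of $A^{-1}(e^{AT}-1)$ at $A=0$, where the $A\downarrow 0$ limit equals $T$ rather than $0$; I would adopt this limiting convention so that the resulting bound is consistent with the case analysis above.
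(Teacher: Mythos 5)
Your proof is correct and is the standard unrolling argument; the paper itself gives no proof of this lemma, as it is quoted verbatim from Milstein and Tretyakov's book (their Lemma~1.6). You have also correctly flagged a typo in the paper's statement: the convention for $A=0$ should be $A^{-1}(e^{AT}-1)\coloneqq T$ (the $A\downarrow 0$ limit, matching the source reference), not $0$ as written here, since for $A=0$ the recursion $y_{k+1}\leq y_k+Bh^p$ iterates to $y_k\leq y_0+kBh^p\leq y_0+TBh^{p-1}$, and the term $TBh^{p-1}$ must appear in the bound.
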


We restate the ``special Gronwall inequality'' of \cite{Holte2009}.
\begin{proposition}
 \label{prop:Special_Gronwall_Inequality}
 Let $(y_n)_n,(g_n)_n\in[0,\infty)^{\bb{N}_0}$, $c\geq 0$, and $N\in\bb{N}$ be arbitrary. If 
 \begin{equation*}
  y_{k+1}\leq c+\sum_{0\leq j\leq k}g_j y_j,\quad k\in[N-1]_0
 \end{equation*}
 then 
 \begin{equation*}
  y_{k+1}\leq c\exp\left(\sum_{0\leq j\leq k}g_j\right),\quad k\in[N-1]_0.
 \end{equation*}
\end{proposition}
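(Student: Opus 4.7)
The natural approach is induction on $k$, strengthening the target slightly to the statement $y_k \leq c \exp(S_k)$ for every $k \in [N]_0$, where $S_k \coloneqq \sum_{0 \leq j < k} g_j$ with the empty-sum convention $S_0 = 0$. The base case $y_0 \leq c$ is implicit in the hypothesis — it is the statement obtained by extending the inequality to $k = -1$ under the empty-sum convention — and in any case it is automatic in the intended applications: in the proof of Theorem \ref{thm:LR_bound_max_error_Banach} the inequality is invoked with $c$ already absorbing a term that dominates $\Abs{e_0}_V$, so $y_0 \leq c$ holds by construction. I would note this once at the start of the argument and then proceed.

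For the inductive step, I would assume that $y_j \leq c \exp(S_j)$ holds for every $j \in [k]_0$ and substitute into the hypothesis to obtain
\begin{equation*}
 y_{k+1} \;\leq\; c + \sum_{j=0}^{k} g_j y_j \;\leq\; c\left(1 + \sum_{j=0}^{k} g_j \exp(S_j)\right).
\end{equation*}
The only remaining task is then the purely analytic inequality $1 + \sum_{j=0}^{k} g_j \exp(S_j) \leq \exp(S_{k+1})$.

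To close this, I would use the telescoping identity
\begin{equation*}
 \exp(S_{k+1}) - 1 \;=\; \sum_{j=0}^{k} \bigl(\exp(S_{j+1}) - \exp(S_j)\bigr) \;=\; \sum_{j=0}^{k} \exp(S_j)\bigl(\exp(g_j) - 1\bigr),
\end{equation*}
combined with the elementary bound $\exp(g_j) - 1 \geq g_j$ valid for $g_j \geq 0$. This directly yields $\exp(S_{k+1}) - 1 \geq \sum_{j=0}^{k} g_j \exp(S_j)$; rearranging and comparing with the previous display closes the induction.

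I do not anticipate a real obstacle. The only subtle point is the interpretation of the base case $y_0 \leq c$, which as noted is best viewed as the $k = -1$ instance of the stated hypothesis under the empty-sum convention; without such an interpretation the conclusion would fail already at $k = 0$ (take $c = 0$ and $y_0 > 0$). Once this is settled, the proof is a clean induction whose engine is the telescoping identity together with $e^{x} \geq 1 + x$.
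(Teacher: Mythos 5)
Your proof is correct, but there is no ``paper's own proof'' to compare it against: the paper states Proposition \ref{prop:Special_Gronwall_Inequality} as a known result and cites \cite{Holte2009} without reproducing an argument. So the useful comparison is to the standard proof in that reference (and in most textbook treatments of the discrete Gronwall inequality), which proceeds by introducing the auxiliary sequence $z_k \coloneqq c + \sum_{0 \leq j < k} g_j y_j$, observing that the hypothesis gives $y_k \leq z_k$ and hence $z_{k+1} = z_k + g_k y_k \leq (1+g_k) z_k$, and concluding $y_{k+1} \leq z_{k+1} \leq c \prod_{j=0}^{k}(1+g_j) \leq c \exp\bigl(\sum_{j=0}^{k} g_j\bigr)$. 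Your route skips the auxiliary sequence and the intermediate product bound, instead closing the induction directly on the exponential via the telescoping identity $\exp(S_{k+1}) - 1 = \sum_{j=0}^{k} \exp(S_j)(\exp(g_j)-1) \geq \sum_{j=0}^{k} g_j \exp(S_j)$. Both are clean and elementary; the classical route has the minor advantage of delivering the sharper product bound $y_{k+1} \leq c\prod_{j=0}^{k}(1+g_j)$ as a by-product, while yours is arguably more self-contained since it never leaves the form of the target inequality.

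You also correctly flag the one genuine subtlety: as literally stated, the hypothesis is imposed only for $k \in [N-1]_0$ and therefore says nothing about $y_0$, yet the conclusion at $k=0$ reads $y_1 \leq c\exp(g_0)$, which can fail if $y_0$ is large relative to $c$ (your counterexample $c=0$, $y_0>0$, $g_0>0$ is decisive). The missing assumption $y_0 \leq c$ is exactly the $n=0$ instance of Holte's hypothesis $y_n \leq c + \sum_{j<n} g_j y_j$ under the empty-sum convention, and it does hold in the paper's only application (Lemma \ref{lem:Special_Gronwall_Inequality_corollary}, where $c = y_0 + \sum_\ell b_\ell \geq y_0$). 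Stating this explicitly before starting the induction, as you do, is the right call; one might reasonably suggest that the paper's statement should include $y_0 \leq c$ among its hypotheses.
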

The following lemma is a corollary of Proposition \ref{prop:Special_Gronwall_Inequality}.
\begin{lemma}
 \label{lem:Special_Gronwall_Inequality_corollary}
 Let $T>0$ and $N\in\bb{N}$ be fixed. Let $(h_k)_k,(y_k)_k,(b_k)_k\in [0,\infty)^{\bb{N}_0}$ be such that for some $A\geq 0$,
 \begin{equation*}
  y_{k+1}\leq (1+Ah_k)y_k+b_k,\quad k\in[N-1]_0.
 \end{equation*}
 Then 
 \begin{equation*}
  y_{k+1}\leq \left(y_0+\sum_{ \ell\in[N-1]_0}b_\ell\right)\exp\left(\sum_{0\leq j\leq k} Ah_j\right),\quad k\in[N-1]_0.
 \end{equation*}
\end{lemma}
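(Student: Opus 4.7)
The plan is to reduce the stated lemma to Proposition \ref{prop:Special_Gronwall_Inequality} (the result of Holte) by first converting the one-step recursive bound on $y_{k+1}$ into an aggregated bound of the form required by that proposition.

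First I would telescope. From $y_{j+1}-y_j\leq Ah_j y_j + b_j$ for $j\in[N-1]_0$, summing over $j\in\{0,\ldots,k\}$ gives
\begin{equation*}
 y_{k+1}\;\leq\; y_0+\sum_{j=0}^{k} Ah_j\,y_j+\sum_{j=0}^{k} b_j.
\end{equation*}
Since $b_\ell\geq 0$ for every $\ell$, the partial sum $\sum_{j=0}^{k} b_j$ is bounded above by $\sum_{\ell\in[N-1]_0}b_\ell$, so setting $c\coloneqq y_0+\sum_{\ell\in[N-1]_0}b_\ell$ and $g_j\coloneqq Ah_j$ we obtain
\begin{equation*}
 y_{k+1}\;\leq\; c+\sum_{0\leq j\leq k} g_j\, y_j,\qquad k\in[N-1]_0.
\end{equation*}
This is precisely the hypothesis of Proposition \ref{prop:Special_Gronwall_Inequality}.

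Applying that proposition yields
\begin{equation*}
 y_{k+1}\;\leq\; c\exp\!\left(\sum_{0\leq j\leq k} g_j\right) \;=\; \left(y_0+\sum_{\ell\in[N-1]_0}b_\ell\right)\exp\!\left(\sum_{0\leq j\leq k} Ah_j\right),
\end{equation*}
which is the conclusion. A small point to address at the induction base $k=0$ is that the telescoping step is trivially consistent because $y_1\leq (1+Ah_0)y_0+b_0 \leq y_0+Ah_0 y_0+b_0$, matching the $k=0$ case of the aggregated inequality.

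I do not anticipate a genuine obstacle here: the step from a multiplicative one-step bound to an additive aggregate bound is standard, and the nonnegativity assumptions on $h_k$, $b_k$ and $y_k$ ensure that the replacement of $\sum_{j=0}^{k} b_j$ by $\sum_{\ell\in[N-1]_0}b_\ell$ only enlarges the right-hand side. The only notational care needed is in keeping the index ranges aligned so that the application of Proposition \ref{prop:Special_Gronwall_Inequality} is literal; an alternative but essentially equivalent route would be a direct induction on $k$ using $\prod_{j=0}^{k}(1+Ah_j)\leq\exp\bigl(\sum_{j=0}^{k}Ah_j\bigr)$, but invoking the already-stated proposition keeps the proof shorter.
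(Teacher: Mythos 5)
Your proposal is correct and follows essentially the same route as the paper's proof: telescope the one-step inequality to obtain $y_{k+1}\leq y_0+\sum_{j\leq k}(Ah_jy_j+b_j)$, use nonnegativity of $(b_\ell)$ to enlarge the $b$-sum to the full range $[N-1]_0$, and then apply Proposition~\ref{prop:Special_Gronwall_Inequality} with $c=y_0+\sum_\ell b_\ell$ and $g_j=Ah_j$. There is no gap and nothing further to address.
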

\begin{proof}
 Rewriting the upper bound on $y_{k+1}$ yields
 \begin{equation*}
  y_{j+1}-y_j\leq Ah_j y_j +b_j,\quad j\in [N-1]_0.
 \end{equation*}
 Summing the differences from $j=0$ to $j=k\in[N-1]_0$ yields
 \begin{equation*}
  y_{k+1}\leq y_0+\sum_{0\leq j\leq k} \left(Ah_j y_j+b_j\right)\leq \left(y_0+\sum_{0\leq \ell \leq N-1}b_\ell\right)+\sum_{0\leq j\leq k} Ah_j y_j,\quad k\in[N-1]_0.
 \end{equation*}
Applying Proposition \ref{prop:Special_Gronwall_Inequality} completes the proof.
\end{proof}

\section{Proofs for Section \ref{ssec:L2_error_bounds_independent_centred_case_Gelfand}}
\label{sec:proofs_for_section_L2_error_bounds_independent_centred_case_Gelfand}

\subsection{Proof of Lemma \ref{lem:martingale}}
\label{ssec:proof_of_lemma_martingale}

Lemma \ref{lem:martingale} states that the stochastic process $(M_k)_{k\in[N-1]_0}$ defined by \eqref{eq:Mk_martingale},
  \begin{equation*}
   M_k\coloneqq \sum_{j=0}^{k}\Ang{\varphi(h_j,t_j,u(t_j))-\psi(h_j,t_j,U_j),\xi_j(h_j)}_H,
  \end{equation*}
  is a $\bb{R}$-valued, square-integrable martingale with respect to the filtration $(\mathcal{F}_k)_{k\in[N-1]_0}$ generated by the $(\xi_k(h_k))_{k\in[N-1]_0}$, and that there exists a universal constant $\kappa>0$ such that for every $k\in[N-1]_0$, the bound \eqref{eq:consequence_BDG_inequality_Mk} holds:
  \begin{equation*}
   \bb{E}\left[\max_{j\in[k]_0}\Abs{M_k} \right]\leq   \Vrt{C_{\varphi,\psi}}_{\infty}^2h^{2q+1}+\frac{1}{4}\bb{E}\left[\max_{j\in[k]_0}\abs{e_j}^2_H\right]+\kappa^2(1+L'_\psi) TC_\xi^2 h^{2p+1}.
  \end{equation*}
 \begin{proof}[Proof of Lemma \ref{lem:martingale}]
   For $(M_k)_k$ to satisfy the definition of a martingale, we must show that for every $k$, $M_k\in L^1(\Omega;\bb{R})$ and is $\mathcal{F}_k$-measurable, and that the martingale property $\bb{E}[M_{k+1}-M_{k}\vert \mathcal{F}_k]=0$ holds for $k\in[N-2]_0$. 
  The measurability of $M_k$ with respect to $\mathcal{F}_k$ follows from the definition of $\mathcal{F}_k$ and Lemma \ref{lem:Uk_measurable_wrt_Fk}.
  By the triangle inequality, the Cauchy--Schwarz inequality, \eqref{eq:squared_bound_abs_Phi_utk_minus_Psi_Uk}, and Assumption \ref{asmp:Noise_regularity_Gelfand},
 \begin{align*}
  \Vrt{M_k}_{L^2(\Omega;\bb{R})}\leq & \sum_{j=0}^{k}\Vrt{\varphi(h_j,t_j,u(t_j))-\psi(h_j,t_j,U_j)}_{L^2(\Omega;H)}^2\Vrt{\xi_j(h_j)}_{L^2(\Omega;H)}^2
  \\
  \leq & \sum_{j=0}^{k}\left(3\Vrt{C_{\varphi,\psi}}_{\infty}^2 h_{k}^{2q+1}+(1+L'_\psi h_k)^2\Vrt{e_k}_{L^2(\Omega;H)}^2\right)C_\xi^2 h^{2p+2}.
 \end{align*}
By Lemma \ref{lem:max_L2_error_independent_centred_noise_Gelfand}, $e_k\in L^2(\Omega;H)$ for every $k$, and thus $M_k\in L^2(\Omega;\bb{R})\subset L^1(\Omega;\bb{R})$. Hence, $(M_k)_k$ is a square-integrable martingale.

Next, we prove the martingale property. By Lemma \ref{lem:Uk_measurable_wrt_Fk}, $\xi_{k+1}(h_{k+1})$ is independent of  $U_{k+1}$ and $\mathcal{F}_k$. By the definition \eqref{eq:Mk_martingale} of $M_k$, the tower property of conditional expectation, and the centredness of the $(\xi_k(h_k))_k$,
\begin{align*}
\bb{E}[\ang{\varphi(h_{k+1},u(t_{k+1}))-\psi(h_{k+1},U_{k+1}), \xi_{k+1}(h_{k+1})}_H]=0,
\end{align*}
and thus $(M_k)_k$ is a $(\mathcal{F}_k)_k$-martingale.

Finally, we prove the second statement. Since $(M_k)_k$ is a square integrable martingale, the Burkholder--Davis--Gundy inequality ensures that for every $k\in[N-1]_0$
\begin{equation*}
 \bb{E}\left[\max_{j\in[k]_0}\Abs{M_j} \right]\leq \kappa \bb{E}\left[\Ang{M}_{k}^{1/2}\right]
\end{equation*}
where $\kappa>0$ is the same universal constant appearing in \eqref{eq:consequence_BDG_inequality_Mk}. The quadratic variation process $\ang{M}$ is defined by $\Ang{M}_0\coloneqq 0$ and $\ang{M}_{k}\coloneqq \sum_{j\in[k]}\bb{E}[(M_j-M_{j-1})^2\vert\mathcal{F}_{j-1}]$ for $k\in[N-1]$, see e.g. \cite[Chapter I, Definition 2.3]{RevuzYor2009}.
Using \eqref{eq:Mk_martingale}, the measurability of $U_j$ with respect to $\mathcal{F}_{j-1}$ (cf. Lemma \ref{lem:Uk_measurable_wrt_Fk}), the Cauchy--Schwarz inequality, and \eqref{eq:squared_bound_abs_Phi_utk_minus_Psi_Uk},
\begin{align*}
 \ang{M}_{k}\leq& \sum_{j\in[k]} \bb{E}\left[\Ang{\varphi(h_j,t_j,u(t_j))-\psi(h_j,t_j,U_j),\xi_j(h_j)}^2_H \middle\vert \mathcal{F}_{j-1}\right]
 \\
 \leq& \max_{j\in[k]}\Abs{\varphi(h_j,t_j,u(t_j))-\psi(h_j,t_j,U_j)}_H^2\sum_{j\in[k]} \bb{E}\left[\Abs{\xi_j(h_j)}^2_H \middle\vert \mathcal{F}_{j-1}\right] 
 \\
 \leq& \max_{j\in[k]}\left(3 \Vrt{C_{\varphi,\psi}}_{\infty}^2 h_{j}^{2q+1}+(1+L'_\psi h_j)\Abs{e_j}_H^2\right)\sum_{j\in[k]} \bb{E}\left[\Abs{\xi_j(h_j)}^2_H \middle\vert \mathcal{F}_{j-1}\right].
\end{align*}
The hypothesis that $0<h\leq 1$ was used to obtain \eqref{eq:squared_bound_abs_Phi_utk_minus_Psi_Uk}.
Using Young's inequality with $s,s'>1$ such that $s^{-1}+(s')^{-1}=1$
\begin{equation*}
 ab\leq \frac{\delta}{s} a^{s}+\frac{1}{\delta^{s'/s} s'}b^{s'}
\end{equation*}
with $s=2$ and $\delta=2\kappa(1+L'_\psi h)$, and using $h_k\leq h\leq 1$,
\begin{align*}
 \ang{M}_{k}^{1/2}\leq & \frac{1}{\kappa(4+4L'_\psi h)}\max_{j\in[k]}\left(3 \Vrt{C_{\varphi,\psi}}_{\infty}^2 h_{j}^{2q+1}+(1+L'_\psi h_j)\Abs{e_j}_H^2\right)
 \\
 &+ \kappa(1+L'_\psi h)\sum_{j\in[k]} \bb{E}\left[\Abs{\xi_j(h_j)}^2_H \middle\vert \mathcal{F}_{j-1}\right]
 \\
 \leq & \frac{\Vrt{C_{\varphi,\psi}}_{\infty}^2}{\kappa}h^{2q+1}+\frac{1}{4\kappa}\max_{j\in[k]_0}\abs{e_j}^2_H+\kappa(1+L'_\psi) \sum_{j\in[N-1]_0} \bb{E}\left[\Abs{\xi_j(h_j)}^2_H \middle\vert \mathcal{F}_{j-1}\right].
\end{align*}
By taking expectations, the tower property removes the conditioning on $\mathcal{F}_{j-1}$ in each summand. Using \eqref{eq:upper_bound_on_sum_time_steps_to_power_plus_one} and the Burkholder--Davis--Gundy inequality completes the proof.
\end{proof}

\subsection{Proof of Proposition \ref{prop:L2_max_error_bound_independent_centred_noise_Gelfand}}
\label{ssec:proof_of_proposition_L2_max_error}

Proposition \ref{prop:L2_max_error_bound_independent_centred_noise_Gelfand} states the error bound
\begin{align*}
 &\Vrt{\max_{k\in[N]_0}\Abs{e_k}_H}_{2}^2
 \\
  \leq & 2\left(\Vrt{e_0}^2_{2}+4\Vrt{C_{\varphi,\psi}}_{\infty}^2 h^{2q}T+ C_\xi^2 Th^{2p+1} (1+\kappa^2(1+L'_\psi))\right)\exp\left(2L'_\psi T\right)
\end{align*}
for the same universal constant $\kappa$ in \eqref{eq:consequence_BDG_inequality_Mk}.
 \begin{proof}[Proof of Proposition \ref{prop:L2_max_error_bound_independent_centred_noise_Gelfand}]
  Since $0<h\leq 1$, we may use \eqref{eq:bound_abs_ekplus1_squared} to obtain
   \begin{align*}
   \abs{e_{k+1}}^2_H-\abs{e_k}_H^2 \leq& L'_\psi h_k \abs{e_k}_H^2+3 \Vrt{C_{\varphi,\psi}}_{\infty}^2 h_{k}^{2q+1}+\Abs{\xi_k(h_k)}_H^2
  \\
  &+2\Ang{\varphi(h_k,t_k,u(t_k))-\psi(h_k,t_k,U_k), \xi_k(h_k)}_H.
  \end{align*}
   Using that $\sum_{j\in[k+1]_0}(\abs{e_{k+1}}^2_H-\abs{e_k}_H^2)=\abs{e_{k+1}}_H^2-\abs{e_0}_H^2$, and using the definition \eqref{eq:Mk_martingale} of the martingale $(M_k)_k$, we obtain
 \begin{align*}
  \abs{e_{k+1}}_H^2\leq &\abs{e_0}_H^2+\sum_{j\in[N-1]_0}\left(3 \Vrt{C_{\varphi,\psi}}_{\infty}^2 h_{j}^{2q+1}+ \Abs{\xi_j(h_j)}_H^2\right)
  \\
  &+2 M_k+L'_\psi \sum_{j\in[k]_0}h_j\abs{e_j}_H^2.
 \end{align*}
 Since only $M_k$ can attain negative values, the above bound implies
 \begin{align*}
  \max_{j\in[k+1]_0}\abs{e_j}_H^2\leq &\abs{e_0}_H^2+\sum_{j\in[N-1]_0}\left(3 \Vrt{C_{\varphi,\psi}}_{\infty}^2 h_{j}^{2q+1}+ \Abs{\xi_j(h_j)}_H^2\right)
  \\
  &+2\max_{j\in [k]_0}\abs{ M_j}+L'_\psi \sum_{j\in[k]_0}h_j\max_{\ell\in[j]_0}\abs{e_\ell}_H^2.
 \end{align*}
 Take expectations, apply Assumption \ref{asmp:Noise_regularity_Gelfand}, apply the bound \eqref{eq:consequence_BDG_inequality_Mk} from Lemma \ref{lem:martingale}, and use that $\bb{E}[\max_{j\in [k]_0}\abs{e_j}_H^2] \leq \bb{E}[\max_{j\in [k+1]_0}\abs{e_j}_H^2]$ to obtain
 \begin{align*}
  \bb{E}\left[\max_{j\in[k+1]_0}\abs{e_j}_H^2\right]\leq & \bb{E}\left[\abs{e_0}_H^2\right]+3 \Vrt{C_{\varphi,\psi}}_{\infty}^2 T h^{2q}+ C_\xi^2 Th^{2p+1}
  \\
  &+2\bb{E}\left[\max_{j\in [k]_0}\abs{ M_j}\right]+L'_\psi \sum_{j\in[k]_0}h_j\bb{E}\left[\max_{\ell\in[j]_0}\abs{e_\ell}_H^2\right]
  \\
  \leq & \bb{E}\left[\abs{e_0}_H^2\right]+4\Vrt{C_{\varphi,\psi}}_{\infty}^2 h^{2q}T+ C_\xi^2 Th^{2p+1}(1+\kappa^2(1+L'_\psi))
  \\
  &+\frac{1}{2}\bb{E}\left[\max_{j\in [k+1]_0}\abs{e_j}_H^2\right]+L'_\psi \sum_{j\in[k]_0}h_j\bb{E}\left[\max_{\ell\in[j]_0}\abs{e_\ell}_H^2\right].
 \end{align*}
 Subtracting $\tfrac{1}{2}\bb{E}[\max_{j\in [k+1]_0}\abs{e_j}_H^2]$ from both sides and applying the discrete Gronwall inequality in Proposition \ref{prop:Special_Gronwall_Inequality} completes the proof.
 \end{proof}

\bibliographystyle{amsplain} 
\bibliography{bibliofile}

\end{document}